\newtheorem{thm}{Theorem}[section]
\newtheorem{cor}[thm]{Corollary}
\newtheorem{lem}[thm]{Lemma}
\newtheorem{prop}[thm]{Proposition}
\theoremstyle{definition}
\newtheorem{defn}[thm]{Definition}
\theoremstyle{remark}
\newtheorem{rem}[thm]{Remark}
\numberwithin{equation}{section}
\newtheorem{ex}[thm]{Example}
\newcommand{\E}{\mathcal{E}}
\newcommand{\ds}{\,{\rm d}s}
\newcommand{\beq}{\begin{equation}}
\newcommand{\eeq}{\end{equation}}
\def\rL{{\rm L}}
\def\rd{{\rm d}}
\begin{document}

\title[]{Hypercontractivity, Nash inequalities and subordination\\ for classes of nonlinear semigroups}%
\author{Fabio Cipriani}
\address{Politecnico di Milano, Dipartimento di Matematica,
piazza Leonardo da Vinci 32, 20133 Milano, Italy.}%
\email{fabio.cipriani@polimi.it}%
\author{Gabriele Grillo}
\address{Dipartimento di Matematica, Politecnico di Torino, Corso Duca degli Abruzzi 24, 10129 Torino, Italy}
\email{gabriele.grillo@polito.it}
\subjclass{}%
\keywords{}%
\begin{abstract}
A suitable notion of hypercontractivity for a nonlinear semigroup $\{T_t\}$ is shown to imply
Nash--type inequalities for its generator $H$, provided
a subhomogeneity property holds for the energy
functional $(u,Hu)$. We use this fact to prove that, for semigroups generated by operators of
$p$-Laplacian-type, hypercontractivity implies ultracontractivity. Then we introduce the notion of
subordinated nonlinear semigroups when the corresponding Bernstein
function is $f(x)=x^\alpha$, and write an explicit formula
for the associated generator. It is shown that hypercontractivity still holds for the subordinated semigroup and, hence, that
Nash-type inequalities hold as well for the subordinated generator.
\end{abstract}
\maketitle

\section{Introduction}
Since the seminal papers of Gross \cite{Gr}, Nelson \cite{Ne},
Federbush \cite{Fe}, Simon and H\o egh-Krohn \cite{SiH}, Davies
and Simon \cite{DS}, the relations among various types of
contractivity properties of linear semigroups on the one hand and
functional inequalities satisfied by their generators on the other
hand, have been intensively investigated. In particular the notions
of hypercontractivity, supercontractivity and ultracontractivity
for {\it linear} Markov semigroups have been related to
logarithmic Sobolev inequalities and/or Sobolev, Nash and
Gagliardo-Nirenberg inequalities involving Dirichlet forms. See
for example \cite{Ba}, \cite{D}, \cite{Gr2} for comprehensive
discussions.

Moreover, generalizations of functional inequalities of Nash and
Gagliardo-Nirenberg type have been considered in \cite{BCLS}, and
in particular it is shown there that, under suitable assumptions,
the validity of a single Gagliardo-Nirenberg inequality implies
the validity of a whole class of them.

Recently, for some classes of nonlinear parabolic partial
differential equations, contractivity properties of their
solutions have been proved as a consequence of the validity of
suitable logarithmic Sobolev inequalities involving the {\it
nonlinear Dirichlet forms} (in the sense of \cite{CG}) associated
to their generator. See \cite{CG2} and \cite{CG3} for the
evolution equation driven by the $p$-Laplacian and \cite{BoG} for
the porous media equation. We refer to the recent work of J.L. Vazquez \cite{Va} for an excellent discussion
of the known smoothing and decay properties for classes of evolutions including the porous media equation and the evolution driven by the $p$-Laplacian, in the Euclidean case.

One of the aims of the present paper is to continue to investigate such
relations in the nonlinear setting. In particular we shall
concentrate ourselves on a sort of converse of what has been
investigated in \cite{CG2}, \cite{CG3} and \cite{BoG}, namely on
the consequences which can be drawn assuming that a nonlinear
semigroup is, in a sense to be defined later, hypercontractive. In
the linear case hypercontractivity is equivalent to a logarithmic
Sobolev inequality for the Dirichlet form associated to the
generator, but in general it does {\it not} imply a Nash (or a
Sobolev) inequality, as the Ornstein-Uhlenbeck example shows.

Concerning the methods, our starting point will be the ideas of
Gross \cite{Gr}. However, nonlinearity plays here a special role
so that new phenomena occur: in particular it will be shown that
nonlinear hypercontractivity implies functional inequalities of
Nash-type for the generator $H$ provided it
satisfies the {\it subhomogeneity property}
\[
(\lambda u, H(\lambda u))\le M\lambda^p(u,Hu).
\]
for all positive $\lambda$, all $u$ in the L$^2$ domain of $H$, a
suitable $M>0$ and a suitable $p>2$, where $(\cdot,\cdot)$ is the
scalar product in L$^2$. Then we draw another surprising
consequence of this fact. Consider the evolution equation driven
by the subgradient of the functional \[ \E_p(u):=\int_M|\nabla
u|^p\,{\rm d}m, \] where $(M,g)$ is a complete Riemannian
manifold, $\nabla$ is the Riemannian gradient and $m$ is a
$\sigma$-finite nonnegative measure on $M$ with the property that
$\nabla$ is closable as an operator from $\rL^2(M,m)$ into
$\rL^2(TM,m)$ (notice that $m$ need not be the Riemannian measure). We call this operator a {\it generalized
Riemannian $p$-Laplacian}. We shall show that, when $p>2$, its
associated semigroup is ultracontractive whenever it is
hypercontractive, a property which has of course no linear
analogue (i.e. no analogue in the case $p=2$).

The second main goal of the paper is to introduce the process of {\it subordination} of a given nonlinear semigroup $\{T_t\}$ w.r.t.
convolution semigroups of probability measures (see e.g. \cite{J}). In the present paper we shall
deal only with the subordination associated to the Bernstein
functions $f(x)=x^\alpha$, $\alpha\in(0,1]$, a choice for which we
give an explicit description of the subordinated nonlinear
generator in terms of the original semigroup. This procedure extends the classical Bochner's one \cite{Bo} when
the starting semigroup is linear.
Then we shall show
that under some assumptions, satisfied in the case in which $\{T_t\}$ is the semigroup associated to a generalized $p$-Laplacian and hypercontractivity holds for $\{T_t\}$, the subordinated semigroup is hypercontractive and subhomogeneous and, hence, Nash-type inequalities hold for its generator as well.

The plan of the paper is as follows. In section 2 we prove
Nash-type inequalities for nonlinear
hypercontractive (or supercontractive) semigroups which are also subhomogeneous: see Theorem \ref{GNI1}.  In
section 3 we prove that if the semigroup associated to a generalized Riemannian $p$-Laplacian
is hypercontractive it is also ultracontractive: see Theorem \ref{plap}. Section 4 is
devoted to the construction of nonlinear subordinated semigroups. The main result there is an explicit formula for the right derivative $A_\alpha u$ at $t=0$ of what we call the {\it nonlinear subordinated family} $S_tu:=\int_0^{+\infty}T_su\,\mu^{(\alpha)}_t(\ds)$ for a certain choice of the subordinator $\mu^{(\alpha)}_t$, and the proof that the resulting operator is monotone, thus giving rise to a well defined nonlinear (subordinated) strongly continuous, nonexpansive semigroup: see Theorem \ref{potenza}.
In section 5 we prove Nash-type inequalities for the subordinated
generators associated to the Bernstein function $f(x)=x^\alpha$,
$\alpha\in(0,1]$ and to the semigroup driven by a generalized
$p$-Laplacian, provided such semigroup is hypercontractive: see Theorem \ref{sub}.

We thank the referee for his (or her) very careful reading of the first version of the present paper.

\section{Nonlinear hypercontractive and supercontractive semigroups}
In the present section we shall deal with {\it nonlinear
hypercontractive semigroups}. Before giving the appropriate definition, we recall that a (nonlinear) strongly continuous semigroup $\{T_t\}_{t\ge0}$ on a a Hilbert space L$^2(X,m)$ is said to be \it nonexpansive \rm if, for all $u,v\in{\rm L}^2(X,m)$, $t\ge0$, one has $\|T_tu-T_tv\|_2\le\|u-v\|_2$. This is in general different from requiring that $\{T_t\}_{t\ge0}$ is \it contractive, \rm namely that $\|T_tu\|_2\le \|u\|_2$ for all $u\in{\rm L}^2(X,m)$, $t\ge0$.
\begin{defn}\label{iper}
A strongly continuous contractive semigroup $\{T_t\}_{t\ge0}$,
not necessarily linear, on a Hilbert space ${\rm L}^2(X,m)$ (see
\cite{B}, \cite{S}) is said to be {\it hypercontractive} if there
exist $\varepsilon>0$ and continuously differentiable functions
$r:[0,\varepsilon)\to [2,+\infty)$, $\alpha, k
:[0,\varepsilon)\to(0,+\infty)$ with $r(0)=2$, $\dot r(t)>0$ for
all $t$, $\alpha(0)=1$, $k(0)=1$ and such that, for all $u\in{\rm
L}^2(X,m)$ and all $t\in[0,\varepsilon)$ one has
\beq\label{iperformula} \Vert T_tu\Vert_{r(t)}\le k(t)\Vert u
\Vert_2^{\alpha(t)}. \eeq
\end{defn}

We shall also use the following definition.

\begin{defn}\label{assum}
A strongly continuous contractive semigroup $\{T_t\}_{t\ge0}$,
not necessarily linear, on a Hilbert space ${\rm L}^2(X,m)$
is said to be $(\beta,s)$-\it supercontractive \rm if there exists $\beta>0$, $s>2$ and a
continuous function $k:(0,+\infty)\to [0,+\infty)$ such that $T_tu\in\rL^s(X,m)$ for all
$t>0$ and all $u\in\rL^2(X,m)$ and
\begin{equation}
\label{super} \Vert T_tu\Vert_s\le k(t)\Vert u\Vert_2^\beta\ \ \
\forall t>0, \forall u\in\rL^2(X,m).
\end{equation}
\end{defn}

\begin{ex}
Bounds of the form \eqref{iperformula}, \eqref{super} hold true, for example, for
the solutions to the Euclidean $p$-heat equation $\dot
u=\triangle_pu:=\nabla\cdot(|\nabla u|^{p-2}\nabla u)$ ($p>2$) or
for the porous media equation $\dot
u=\triangle(u^m):=\triangle(|u|^{m-1}u)$ ($m>1$). See \cite{CG2, BoG, BoG2}.
\end{ex}

\begin{rem}
Hereafter we shall denote by $H$ the principal section, in the
sense of Brezis \cite{B}, of the generator of the semigroup
$\{T_t\}_{t\ge0}$. \end{rem}

Our first result consists in remarking that an immediate
consequence of the definition of
hypercontractivity is the validity of a suitable {\it
inhomogeneous} logarithmic Sobolev inequality involving the
functional $(u,Hu)$.

\begin{lem}\label{iperteo}
Let $\{T_t\}_{t\ge0}$ be a not necessarily linear hypercontractive
semigroup in the sense of Definition \ref{iper}. Then the
logarithmic Sobolev inequality \beq\label{logsob} \int_Xu^2\log
|u|\rd m-c_1\|u\|_2^2\log\|u\|_2\le c_2(u,Hu)_{\rL^2}+c_3\|u\|_2^2
\eeq holds for any $u$ belonging to the $\rL^2$ domain of the
$\rL^2$ generator $H$ of $\{T_t\}_{t\ge0}$, where
\[
c_1=\dfrac{\dot r(0)+2\dot\alpha(0)}{\dot r(0)},\ \
c_2=\dfrac{1}{\dot r(0)}\ \  c_3=\dfrac{2\dot k(0)}{\dot r(0)}.
\]
\end{lem}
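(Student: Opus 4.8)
The plan is to differentiate the hypercontractivity inequality at $t=0$.
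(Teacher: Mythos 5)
Your plan is exactly the strategy of the paper: both sides of \eqref{iperformula} agree at $t=0$ (since $r(0)=2$, $\alpha(0)=1$, $k(0)=1$ give $\|T_0u\|_2=\|u\|_2$ on both sides), so the right derivative at $t=0$ of the left side is bounded above by that of the right side, and this derivative inequality is precisely \eqref{logsob}. So the approach is correct and is the same as the paper's.

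However, as written the proposal is only a plan, and it omits the two ingredients that carry all the content. First, you should say explicitly why differentiating an inequality is legitimate here: it is only because the two sides are \emph{equal} at $t=0$ that the one-sided derivatives can be compared; without noting this the step is unjustified. Second, and more substantially, you need the formula for the derivative of the varying-exponent norm, namely
\[
\left.\dfrac{\rd}{\rd t}\Vert T_tu\Vert_{r(t)}\right|_{t=0}
=\|u\|_{2}^{-1}\left(\dfrac{\dot r(0)}{2}\int_X u^2\log\dfrac{|u|}{\|u\|_2}\,\rd m-(u,Hu)_{\rL^2}\right),
\]
which is the nonlinear analogue of Gross's computation (the paper cites \cite[Lemma 3.8]{Gr2}); this is where the entropy term and the energy $(u,Hu)_{\rL^2}$ enter, and its proof requires $u\in{\rm Dom}\,H$ so that $t\mapsto T_tu$ is right-differentiable at $0$. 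Differentiating the right-hand side $k(t)\|u\|_2^{\alpha(t)}$ gives $\dot k(0)\|u\|_2+\dot\alpha(0)\|u\|_2\log\|u\|_2$, and combining and rearranging yields \eqref{logsob} with the stated constants. Without the displayed formula your plan cannot produce the constants $c_1,c_2,c_3$, so you should either prove it or cite it.
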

\begin{proof}
By \cite[Lemma 3.8]{Gr2} we have that, if $r$ is a continuously differentiable function with values in
$[2,+\infty)$ and $r(0)=2$.
\[
\left.\dfrac{\rd}{\rd t}\Vert
T_tu\Vert_{r(t)}\right|_{t=0}=\|u\|^{-1}_{2}\left(\dfrac{\dot r(0)}{2}\int_X
u^2\log\dfrac{|u|}{\|u\|_2}\rd m-(u,Hu)_{{\rm L}^2}\right).
\]
\end{proof}

Notice that $c_1=1$ if and only if
$\dot\alpha(0)=0$. If this is the case the log-Sobolev inequality \eqref{logsob} involves the usual entropy functional.

\begin{defn}\label{homog}
We say that the generator $H$ of the semigroup $\{T_t\}_{t\ge0}$
is {\it subhomogeneous} of degree $p>0$ if $(u,Hu)_{\rL^2}\ge0$ for all $u\in D(H)$ and there exists a positive
$M$ such that, for all positive $\lambda$ and all $u\in{\rm
Dom}\,H$ one has that $\lambda u\in{\rm Dom}\,H$ as well and
moreover
\[
( \lambda u, H(\lambda u))_{\rL^2}\le M\lambda^p(u,Hu)_{\rL^2}.
\]
\end{defn}

\begin{ex}
Consider a domain $D$ in the Euclidean space ${\mathbb R}^n$ and the
operator $H$ defined on a suitable subset of L$^2(D)$ and
given, when the quantity below exists, by
\[
Hu=-\nabla\cdot \left(a(x,u,\nabla u)|\nabla u|^{p-2}\nabla
u\right),
\]
$a:D\times{\mathbb R}\times{\mathbb R}^n\to[0,+\infty)$ being a positive differentiable function. Let us suppose that $H$ can be extended as a densely defined
maximally monotone operator: for conditions guaranteeing this
facts see \cite{S}. The subhomogeneity assumption of degree $p$
is equivalent to:
\[
a(x,\lambda v,\lambda\xi)\ge Ma(x,v,\xi)\ \ \forall v\in{\mathbb
R}, \xi\in{\mathbb R}^n,\lambda>0
\]
and for a suitable fixed positive $M$. Of course this implies a
similar {\it lower bound} on $a(x,v,\xi)$, so that there exists $C>0$ such that
\[
\frac1C\, a(x,\lambda v,\lambda \xi)\le a(x,v,\xi)\le Ca(x,\lambda v,\lambda\xi),\ \ \forall \lambda>0,x\in D,v\in{\mathbb R}, \xi\in{\mathbb R}^n.
\] Letting $\lambda\to 0$ and using the continuity of $a$ yields that upper
and lower bounds on $a(x,v,\xi)$ in terms of functions of the
space variable $x$ only must hold.
\end{ex}

\begin{lem}\label{logarit}
Let $\{T_t\}_{t\ge0}$ be a nonlinear semigroup which is
hypercontractive in the sense of Definition \ref{iper}. Assume moreover
that $H$ is strictly positive in the sense that
\[
(u,Hu)_{\rL^2}>0 \ \ \forall u\in{\rm Dom}\,H, u\not\equiv0,
\]
that $H$ is subhomogeneous of degree $p$ in the sense of
Definition \ref{homog} for a suitable $p>2$, and that $\dot \alpha(0)<0$,
$\dot r(0)>0$. Then the logarithmic Sobolev inequality
\beq\label{log} \int_X\dfrac{u^2}{\Vert u\Vert_2^2}\log
\dfrac{u^2}{\Vert u\Vert_2^2}\rd m\le
k_1\log\left(k_2\dfrac{(u,Hu)_{\rL^2}}{\Vert
u\Vert_2^p}\right)\eeq holds true for any $u$ in the $\rL^2$
domain of $H$, where the positive constants $k_1,k_2$ are defined
by
\[
k_1=-\dfrac{4\dot\alpha(0)}{(p-2)\dot r(0)},\ \
k_2=-\dfrac{M(p-2)}{2\dot\alpha(0)}e^{1-[\dot
k(0)(p-2)]/[\dot\alpha(0)]}.
\]
If in addition $X$ has finite measure, there exists $C>0$ such that the coercive bound
\[
\|Hu\|_2\ge C\|u\|_2^{p-1}.
\]
holds true.
\end{lem}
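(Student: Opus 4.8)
The plan is to upgrade the inhomogeneous logarithmic Sobolev inequality \eqref{logsob} of Lemma \ref{iperteo} into the scale-invariant bound \eqref{log} by exploiting the subhomogeneity of $H$ through a scaling-and-optimization argument. First I would apply \eqref{logsob} not to $u$ but to the rescaled function $\lambda u$, for an arbitrary $\lambda>0$ (legitimate since Definition \ref{homog} guarantees $\lambda u\in\mathrm{Dom}\,H$), and bound the right-hand side using the subhomogeneity estimate $(\lambda u,H(\lambda u))_{\rL^2}\le M\lambda^p(u,Hu)_{\rL^2}$. Expanding the logarithms created by the factor $\lambda$ and dividing through by $\lambda^2\|u\|_2^2$ isolates, on the left, one half of the normalized entropy $N:=\int_X\frac{u^2}{\|u\|_2^2}\log\frac{u^2}{\|u\|_2^2}\,\rd m$ of \eqref{log}, plus a residual term $(1-c_1)(\log\lambda+\log\|u\|_2)$, while the right-hand side becomes $c_2M\lambda^{p-2}\,(u,Hu)_{\rL^2}/\|u\|_2^2+c_3$. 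The hypotheses $\dot\alpha(0)<0,\ \dot r(0)>0$ ensure $1-c_1=-2\dot\alpha(0)/\dot r(0)>0$, and the strict positivity of $(u,Hu)_{\rL^2}$ ensures that the coefficient of $\lambda^{p-2}$ is strictly positive, so that the optimization below is well posed.

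I would then regard $\lambda$ as a free parameter and minimize the right-hand side. As a function of $\lambda$ it has the form $b\lambda^{p-2}Q-a\log\lambda$ up to additive constants, with $a=1-c_1>0$, $b=c_2M>0$ and $Q=(u,Hu)_{\rL^2}/\|u\|_2^2>0$; since $p>2$ it is strictly convex in $\log\lambda$ and is minimized at $\lambda^{p-2}=a/[b(p-2)Q]$. The crux of the calculation, and what I expect to be its heart, is the fate of the $\|u\|_2$-dependent terms: the leftover $(1-c_1)\log\|u\|_2$ on the left and the contribution $\log Q=\log\big((u,Hu)_{\rL^2}/\|u\|_2^p\big)+(p-2)\log\|u\|_2$ arising at the optimal $\lambda$ combine so that, precisely because the subhomogeneity degree equals $p$ and the minimization produces the factor $1/(p-2)$, every power of $\|u\|_2$ cancels. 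What remains is an inequality $N\le k_1\log\big(k_2\,(u,Hu)_{\rL^2}/\|u\|_2^p\big)$, and a bookkeeping of the constants $c_1,c_2,c_3$ of Lemma \ref{iperteo} reproduces exactly the stated $k_1$ and $k_2$.

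For the coercive bound I would assume $m(X)<+\infty$. The new ingredient is a lower bound for the entropy on the left of \eqref{log}: applying Jensen's inequality to the convex map $t\mapsto t\log t$ against the probability measure $m/m(X)$, with $f:=u^2/\|u\|_2^2$ satisfying $\int_X f\,\rd m=1$, yields $N\ge-\log m(X)$. Inserting this into \eqref{log} and exponentiating (using $k_1>0$) gives a uniform bound $(u,Hu)_{\rL^2}\ge C\|u\|_2^p$ with $C=m(X)^{-1/k_1}/k_2>0$. The Cauchy--Schwarz inequality $(u,Hu)_{\rL^2}\le\|u\|_2\,\|Hu\|_2$ then converts this into $\|Hu\|_2\ge C\|u\|_2^{p-1}$, the asserted estimate.

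The main obstacle is not any individual estimate but the verification that the powers of $\|u\|_2$ cancel after optimization: this is exactly the mechanism by which subhomogeneity of degree $p>2$ turns an inhomogeneous log-Sobolev inequality into a homogeneous, Nash-type one, and it manifestly breaks down at $p=2$, in accordance with the fact that the phenomenon has no linear counterpart.
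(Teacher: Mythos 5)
Your proof is correct and follows essentially the same route as the paper: apply the inhomogeneous logarithmic Sobolev inequality of Lemma \ref{iperteo} to $\lambda u$, invoke subhomogeneity, optimize over $\lambda>0$ (well posed since $(u,Hu)_{\rL^2}>0$ and $p>2$), and obtain the coercive bound from Jensen's inequality plus Cauchy--Schwarz. The only cosmetic difference is that you carry $\Vert u\Vert_2$ through the optimization and verify that its powers cancel, whereas the paper first normalizes $\Vert u\Vert_2=1$ and rescales at the end via a second application of subhomogeneity.
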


\begin{proof}
Inequality (\ref{logsob}), where $u$ is replaced by
$\lambda u$ with $\|u\|_2=1$, $\lambda>0$, implies by the
subhomogeneity of $H$:
\[
\int_Xu^2\log|u|\rd m+\widetilde c_1\log\lambda\le
Mc_2\lambda^{p-2}(u,Hu)_{\rL^2}+c_3
\]
where $\widetilde c_1:=1-c_1=-2\dfrac{\dot\alpha(0)}{\dot r(0)}>0.$
We rewrite the above formula as \beq\label{parapapa} \widetilde
c_1\log \lambda-Mc_2\lambda^{p-2}(u,Hu)_{\rL^2}\le
c_3-\int_Xu^2\log |u|\rd m. \eeq The real function
$a(\lambda):=A\log\lambda-B\lambda^{p-2}$ ($\lambda>0$, $A,B>0$)
attains its minimum when $
\lambda=\overline\lambda:=\left[A/[B(p-2)]\right]^{1/(p-2)}
$
and one has
\[
a(\overline\lambda)=\dfrac{A}{p-2}\log\left(\dfrac{A}{Be(p-2)}\right).
\]
Then
\[
\dfrac{\widetilde c_1}{p-2}\log\left(\dfrac{\widetilde
c_1}{Mc_2(u,Hu)_{\rL^2}e(p-2)}\right)\le c_3-\int_Xu^2\log|u|\rd m
\]
which can be rewritten as
\[
\begin{split}
\int_X u^2\log |u|\rd m&\le c_3+\dfrac{\widetilde
c_1}{p-2}\log\left[\dfrac{Mc_2}{\widetilde
c_1}e(p-2)(u,Hu)_{\rL^2}\right]\\
&=\dfrac{\widetilde c_1}{p-2}\log\left[e^{c_3(p-2)/\widetilde c_1
}\dfrac{Mc_2}{\widetilde
c_1}e(p-2)(u,Hu)_{\rL^2}\right]\\
&=\dfrac{k_1}{2}\log[k_2(u,Hu)_{\rL^2}]\end{split}
\]
with $k_1,k_2$ as in the statement.

Writing now the latter inequality with $u$ replaced by $u/\|u\|_2$
and using again subhomogeneity yields \eqref{log}.

As for the last statement, just notice that under the running assumption the bound
\[\int_X\dfrac{u^2}{\Vert u\Vert_2^2}\log
\dfrac{u^2}{\Vert u\Vert_2^2}\rd m\ge -\log[m(X)]\]
holds true by Jensen's inequality. Therefore
\[\log\left(k_2\dfrac{(u,Hu)_{\rL^2}}{\Vert
u\Vert_2^p}\right)\ge-\frac{\log m(X)}{k_1}=\log\left[\left(m(X)\right)^{-1/k_1}\right]\]
so that \[(u,Hu)_{\rL^2}\ge \frac1{k_2m(X)^{1/k_1}}\|u\|_2^p.\]
The final statement follows because $(u,Hu)_{\rL^2}\le |(u,Hu)_{\rL^2}|\le \|u\|_2\|Hu\|_2$.

\end{proof}

We now prove that a Nash-type inequality follows
straightly from the logarithmic Sobolev inequality given in Lemma
\ref{logarit}.
We shall use this result in the final section, in
which Nash-type inequalities will be proved for operators which are the generators of
semigroups which are, in a sense to be defined, subordinated to the $p$-Laplacian semigroup.

We first define the Young functional \beq\label{young}
J(p,u)=\int_X \frac{|u(x)|^p}{\|u\|_p^p}
\log\left[\frac{|u(x)|}{\|u\|_p}\right]\rd m(x).
\eeq Notice that, by definition, $J(r,u)=\dfrac{1}{r}J(1,|u|^r)$.

\begin{thm}\label{GNI1}(Nash-type inequalities). Let $\{T_t\}_{t\ge0}$ be a nonlinear semigroup which is
hypercontractive in the sense of Definition \ref{iper}. Assume moreover
that $H$ is strictly positive in the sense that
\[
(u,Hu)_{\rL^2}>0 \ \ \forall u\in{\rm Dom}\,H, u\not\equiv0,
\]
that $H$ is subhomogeneous of degree $p$ in the sense of
Definition \ref{homog} for a suitable $p>2$, and that $\dot \alpha(0)<0$,
$\dot r(0)>0$. Then, for any fixed $m\in[1,2)$ the inequality \beq\label{m-nash}
\|f\|_2\le C(Hf,f)_{{\rm
L}^2}^{\frac{k_1(2-m)}{m+pk_1(2-m)}}\|f\|_m^{\frac{m}{m+pk_1(2-m)}}
\eeq holds true for a suitable constant $C$ and for all $f\in{\rm
Dom}\,H\cap{\rm L}^m$, where $k_1=-\dfrac{4\dot\alpha(0)}{(p-2)\dot r(0)}>0$ is the constant appearing in
Lemma \ref{logarit}. In particular the Nash inequality \beq\label{1-nash}
\|f\|_2\le C(Hf,f)_{{\rm
L}^2}^{\frac{k_1}{1+pk_1}}\|f\|_1^{\frac{1}{1+pk_1}} \eeq holds
true for all $f\in{\rm Dom}\,H\cap{\rm L}^1$.

A similar conclusion holds if, keeping fixed the assumptions on $H$, $\dot \alpha(0)$ and $\dot r(0)$, one replaces the hypercontractivity assumption by the assumption that the semigroup considered is
$(\beta,s)$-supercontractive in the sense of Definition
\ref{assum}, for suitable $\beta<1$, $s\ge\max(2,2/\beta)$, and with $k$ such that
$\lim_{t\to0}k(t)^t$ is finite. In fact, the
inequalities \eqref{m-nash} and \eqref{1-nash} hold if one replaces $k_1$ above with $k_1=2(1-\beta)/([\beta(p-2)(s-2)]$.

\end{thm}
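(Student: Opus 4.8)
The plan is to derive the Nash-type inequality \eqref{m-nash} directly from the logarithmic Sobolev inequality \eqref{log} established in Lemma \ref{logarit}, by converting the entropy bound into a power-type bound through an optimization over an interpolation parameter. The key observation is the scale-invariance of \eqref{log}: since both sides are unchanged when $u$ is replaced by $\lambda u$, we may freely normalize. I would first record \eqref{log} in the form
\[
J(2,f)\le \frac{k_1}{2}\log\left(k_2\,\frac{(Hf,f)_{\rL^2}}{\|f\|_2^p}\right),
\]
using the Young functional \eqref{young} with $p=2$, where $J(2,f)=\int_X \frac{f^2}{\|f\|_2^2}\log\frac{|f|}{\|f\|_2}\,\rd m$ is half the normalized entropy appearing in \eqref{log}.

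The main mechanism is an \emph{interpolation-and-exponentiate} argument. First I would bound the left-hand entropy $J(2,f)$ from below in terms of the ratio $\|f\|_2/\|f\|_m$ for $m\in[1,2)$. The natural tool is the classical fact that the $\rL^q$ norm $q\mapsto \log\|f\|_q$ is convex in $1/q$, equivalently that the entropy controls the logarithmic derivative of the norms; concretely, a lower bound of the shape
\[
J(2,f)\ge \frac{m}{2-m}\,\log\frac{\|f\|_2}{\|f\|_m}
\]
should hold (this is the reverse-Jensen/interpolation inequality relating the entropy of $f^2$ at exponent $2$ to the gap between the $\rL^2$ and $\rL^m$ norms). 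Combining this lower bound with the upper bound from \eqref{log} yields, after cancelling the logarithms (using that $\log$ is increasing, so the inequality passes to the arguments),
\[
\left(\frac{\|f\|_2}{\|f\|_m}\right)^{\frac{2m}{(2-m)k_1}}\le k_2\,\frac{(Hf,f)_{\rL^2}}{\|f\|_2^p}.
\]
Multiplying through by $\|f\|_2^p$, collecting all powers of $\|f\|_2$ on the left, and solving for $\|f\|_2$ produces exactly the exponents $\frac{k_1(2-m)}{m+pk_1(2-m)}$ on $(Hf,f)_{\rL^2}$ and $\frac{m}{m+pk_1(2-m)}$ on $\|f\|_m$ in \eqref{m-nash}; the specialization $m=1$ gives \eqref{1-nash}. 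The constant $C$ absorbs $k_2$ raised to the appropriate power.

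For the supercontractive variant, the strategy is the same but the input log-Sobolev inequality changes. The hypothesis $\beta<1$, $s\ge\max(2,2/\beta)$, and finiteness of $\lim_{t\to0}k(t)^t$ should, by an argument parallel to Lemma \ref{iperteo} and Lemma \ref{logarit} but applied to the supercontractivity bound \eqref{super} rather than \eqref{iperformula}, yield a logarithmic Sobolev inequality of the same form \eqref{log} but with the constant $k_1$ replaced by $2(1-\beta)/[\beta(p-2)(s-2)]$. Once that inequality is in hand, the interpolation-and-exponentiate step is verbatim the same, so \eqref{m-nash} and \eqref{1-nash} follow with the new $k_1$.

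I expect the main obstacle to be establishing the entropy lower bound $J(2,f)\ge \frac{m}{2-m}\log(\|f\|_2/\|f\|_m)$ with the correct sharp constant and verifying it for all $f\in\rL^m\cap\rL^2$ with possibly unbounded support, since one must control the entropy of $f^2$ by norm ratios without any a priori integrability beyond membership in the two Lebesgue spaces; the honest way is via differentiating $q\mapsto\log\|f\|_q$ and convexity in $1/q$, with care taken at the endpoints. A secondary technical point, for the supercontractive case, is deriving the modified log-Sobolev inequality from \eqref{super}: here the condition that $\lim_{t\to0}k(t)^t$ be finite is precisely what makes the formal differentiation at $t=0$ legitimate, and one must check that the resulting constant comes out to $2(1-\beta)/[\beta(p-2)(s-2)]$ rather than merely being of the same order.
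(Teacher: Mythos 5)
Your first half reproduces the paper's own argument. The entropy lower bound $J(2,f)\ge \frac{m}{2-m}\log(\|f\|_2/\|f\|_m)$ that you single out as the ``main obstacle'' is precisely the inequality $\|u\|_q/\|u\|_r\le e^{\frac{q-r}{qr}J(1,|u|^q)}$ (with $q=2$, $r=m$, and $J(1,f^2)=2J(2,f)$) which the paper simply quotes from \cite{BG4}; combined with Lemma \ref{logarit} and your exponentiate-and-solve step, this is exactly the published proof of \eqref{m-nash} in the hypercontractive case. One bookkeeping remark: running your chain literally gives the exponent $k_1(2-m)/(2m)$ on the logarithm and hence $2m$ rather than $m$ in the denominators of \eqref{m-nash}; the paper's own computation exhibits the same factor of two (it writes $k_1(2-m)/m$ where the quoted inequality yields $k_1(2-m)/(2m)$), so this is a normalization issue tied to the precise statement in \cite{BG4} rather than a defect of your route, but it does mean your claim that the computation ``produces exactly'' the stated exponents should be checked rather than asserted.

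The genuine gap is in the supercontractive half. You propose to obtain the modified log-Sobolev inequality ``by an argument parallel to Lemma \ref{iperteo} applied to \eqref{super}'', with the finiteness of $\lim_{t\to0}k(t)^t$ ``making the formal differentiation at $t=0$ legitimate''. But Lemma \ref{iperteo} differentiates $t\mapsto\|T_tu\|_{r(t)}$ at $t=0$ with $r(0)=2$, whereas \eqref{super} carries a fixed exponent $s>2$ and a constant $k(t)$ that may blow up as $t\downarrow0$: there is no family of norms collapsing to $\|\cdot\|_2$ to differentiate, and $\|T_tu\|_s$ is not controlled at $t=0$. The missing device is interpolation between $\rL^2$-contractivity and \eqref{super}: writing $\|T_tu\|_{p(t)}\le\|T_tu\|_s^{\phi}\|T_tu\|_2^{1-\phi}$ with interpolation parameter $\vartheta=1-t$ produces a genuine hypercontractivity-type bound $\|T_tu\|_{p(t)}^{p(t)a(t)}\le k(t)^{2t/\beta}\|u\|_2^2$ with $p(0)=2$, and it is here --- not in a differentiation --- that the hypothesis on $k(t)^t$ enters, giving a uniform bound $A\|u\|_2^2$ with a fixed $A>1$. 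A tangent-line argument at $t=0$, using \cite[Lemma 3.8]{Gr2} and the chain rule, then yields the inhomogeneous log-Sobolev inequality with coefficient $(s\beta-2)/[\beta(s-2)]$ in front of $\|u\|_2^2\log\|u\|_2$; the hypotheses $\beta<1$ and $s\ge\max(2,2/\beta)$ serve exactly to place this coefficient in $(0,1)$ so that the $\lambda$-optimization of Lemma \ref{logarit} can be rerun and produces $k_1=2(1-\beta)/[\beta(p-2)(s-2)]$. Without this interpolation step the second half of your outline does not get off the ground.
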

\begin{proof}
We rewrite the assertion of Lemma \ref{logarit} as
\[
J(1,f^2)\le k_1\log\left[k_2\dfrac{(Hf,f)_{{\rm
L}^2}}{\Vert f \Vert_2^p}\right].
\]
Next we recall that, if $q>r$, \beq \dfrac{\Vert u
\Vert_q}{\Vert u\Vert_r}\le e^{\frac{q-r}{qr}J(1,|u|^q)}. \eeq
See \cite{BG4} for a proof of this fact. By using such inequality with $q=2$, $r=m<2$ we get
\[
\log\dfrac{\Vert f\Vert_2}{\Vert f\Vert_m}\le
\log\left\{\left[k_2\dfrac{(Hf,f)_{{\rm L}^2}}{\Vert f
\Vert_2^p}\right]^{k_1(2-m)/m}\right\}
\]
which is an equivalent form of our statement in the hypercontractive case.

To deal with the supercontractive case,
first we notice that there exists a positive $C$
such that the logarithmic Sobolev inequality
\begin{equation}
\dfrac{s-2}{s}\int_Xu^2\log |u|\rd m-\dfrac{s\beta-2}{s\beta}\Vert
u\Vert_{{2}}^2\log \Vert u\Vert_{{2}}\le
(u,Hu)_{\rL^2}+C\Vert u\Vert_{{2}}^2
\end{equation}
holds true for all $u\in\rL^2\cap{\rm Dom}\,H$, where $H$ is the
generator of $T_t$ and ${\rm Dom}\,H$ is its $\rL^2$ domain. In fact, an elementary interpolation argument with the exponents $\phi=\frac{\vartheta\beta}{\vartheta\beta+1-\vartheta}$, $1-\phi=\frac{1-\vartheta}{\vartheta\beta+1-\vartheta}$ shows that, for all $\vartheta\in(0,1)$:
\begin{equation}\label{int1}
\Vert
T_tu\Vert_{\frac{2s(\vartheta\beta+1-\vartheta)}{2(1-\vartheta)+s\vartheta\beta}}\le
k(t)^{(1-\vartheta)/[\vartheta\beta+1-\vartheta]}\Vert
u\Vert_2^{\beta/[\vartheta\beta+1-\vartheta]},
\end{equation}
Choosing
$1-\vartheta=t$ for $t$ small we get:
\[
\Vert T_tu\Vert_{p(t)}^{p(t)a(t)}\le
k(t)^{\frac{2t}{\beta}}\Vert u\Vert_{2}^2
\]
with
\[
p(t)=\dfrac{2s((1-t)\beta+t)}{2t+s\beta(1-t)};\ \ \
a(t)=\dfrac{2t+s\beta(1-t)}{s\beta}.
\]
By the assumption on the behaviour of the function $k$ near $t=0$ we have also
\begin{equation}\label{tecnica}
\Vert T_tu\Vert_{p(t)}^{p(t)a(t)}\le
A\Vert u\Vert_{2}^2
\end{equation}
for a suitable constant $A$, where we can assume that $A>1$ (note the strict inequality: if the function $k$ is such that the above inequality holds with $A=1$ the argument below can be simplified proceeding as in \cite{Gr}).
This is a hypercontractive bound similar to the ones studied above. In the l.h.s. of \eqref{tecnica} we have a real function $f$ of $t\ge0$ which is differentiable at $t=0$ and satisfies $f(0)< A\|u\|_2^2$. Then the tangent line to the graph of $f$ at $t=0$ lies
below $A\|u\|_2^2$ for sufficiently small $t$
Next, \cite[Lemma 3.8]{Gr2} (see the proof of Lemma \ref{iperteo}) and the chain rule imply that
\[
\left.\frac{{\rm d}}{{\rm d}t}\Vert T_tu\Vert_{p(t)}^{p(t)a(t)}\right|_{t=0}
=\dfrac{2(s-2)}{s}\int_Xu^2\log|u|\rd m-2\int_Xu(Hu)\,\rd
m-2\dfrac{s\beta-2}{s\beta}\Vert u\Vert_{2}^{2}\log \Vert
u\Vert_{\rL^{2}}.
\]
Therefore, for $t$ sufficiently small,
\[
\|u\|_2^2+t\left[\dfrac{2(s-2)}{s}\int_Xu^2\log|u|\rd m-2\int_Xu(Hu)\,\rd
m-2\dfrac{s\beta-2}{s\beta}\Vert u\Vert_{2}^{2}\log \Vert
u\Vert_{\rL^{2}}\right]\le A\|u\|_2^2
\]
or equivalently
\[
\dfrac{2(s-2)}{s}\int_Xu^2\log|u|\rd m-2\int_Xu(Hu)\,\rd
m-2\dfrac{s\beta-2}{s\beta}\Vert u\Vert_{2}^{2}\log \Vert
u\Vert_{\rL^{2}}\le \frac{A-1}{t}\|u\|_2^2
\]
again for $t$ sufficiently small. Finally, for the same values of $t$,
\[
\int_Xu^2\log |u|\rd m-\dfrac{s\beta-2}{\beta(s-2)}\Vert
u\Vert_{{2}}^2\log \Vert u\Vert_{{2}}\le
\dfrac{s}{s-2}(u,Hu)_{\rL^2}+\dfrac{s(A-1)}{2t(s-2)}\Vert
u\Vert_{{2}}^2.
\]
Fixing $t$ sufficiently small we find that there is a constant $C>0$ such that
\[
\int_Xu^2\log |u|\rd m-\dfrac{s\beta-2}{\beta(s-2)}\Vert
u\Vert_{{2}}^2\log \Vert u\Vert_{{2}}\le
\dfrac{s}{s-2}(u,Hu)_{\rL^2}+C\Vert
u\Vert_{{2}}^2.
\]

To proceed further we use the same reasoning used in the proof of Lemma \ref{logarit}. In fact we can proceed exactly as in that proof provided the constant $(s\beta-2)/[\beta(s-2)]$, which
takes here the role of $c_1$ there, lies in the interval (0,1). This is
true under our assumptions on $s$ and $\beta$. Then we get that the logarithmic Sobolev inequality \beq\label{log2}
\int_X\dfrac{u^2}{\Vert u\Vert_2^2}\log \dfrac{u^2}{\Vert
u\Vert_2^2}\rd m\le k_1\log\left(k_2\dfrac{(u,Hu)_{\rL^2}}{\Vert
u\Vert_2^p}\right)\eeq holds true for any $u$ in the $\rL^2$
domain of $H$, where $k_2$ is a suitable positive constant and
$k_1=2(1-\beta)/([\beta(p-2)(s-2)]$. The final statement follows as in the hypercontractive case.
\end{proof}

\begin{rem}
The restriction on $s$ in the result concerning supercontractive semigroups may seem strange since a similar result is proved in the same Theorem under the assumption of hypercontractivity only. However it is related to the additional request $\dot\alpha(0)<0$ assumed in the hypercontractive case.
\end{rem}

\begin{rem}
A whole family of Gagliardo-Nirenberg inequalities follows from the Nash inequality proved above if the functional $W(u):=(u,Hu)_{{\rm L}^2}^{1/p}$ satisfies certain contractivity properties defined in \cite{BCLS}. This happens in particular in the case in which $H$ is the $p$-Laplacian on a Riemannian manifold, a situation which will be discussed further in the present paper.
\end{rem}

\section{From hypercontractivity to ultracontractivity for evolutions driven by
generalized $p$-Laplacians}

In this section we shall deal with a particular choice of the
semigroup $\{T_t\}_{t\ge0}$. In fact, let $(M,g)$ be a complete
Riemannian manifold, whose Riemannian gradient is indicated by
$\nabla$. Then we choose a $\sigma$-finite nonnegative measure
$m$ on $M$, and assume that $\nabla$ is a closed operator from
L$^2(M,m)$ to L$^2(TM,m)$. Notice that $m$ {\it need not be the Riemannian measure}. We shall then consider the strongly
continuous contraction semigroup $\{T_t\}_{t\ge0}$ associated to
the subgradient of the convex lower semicontinuous functional given by
\beq\label{pl} \E_p(u):=\int_M|\nabla u|^p\,{\rm d}m \eeq for
those $\rL^2(X,m)$ functions for which the integral is finite, and
by $+\infty$ otherwise in $\rL^2(M,m)$. In fact, lower
semicontinuity of the above functional is a consequence of the
assumption that $\nabla$ is closed (see \cite{CG4}), but could be
alternatively assumed directly.

Hereafter the condition $p>2$ will be necessary to apply the
previous results. The generator of the semigroup that we consider in this section is the operator given formally by $H=-\nabla^*(|\nabla
u|^{p-2}\nabla u)$, where $\nabla^*$ is for the formal adjoint of the gradient with
respect to the inner products of $\rL^2(M,m)$ and of $\rL^2(TM,m)$.

We stress that we choose the above semigroup as a {\it model case}
for our discussion to hold, but that much more general situations
can be dealt with by identical methods: see \cite{CG, CG4} for details.

The semigroup associated to the subgradient of $\E_p$ may or may
not be hypercontractive. Our aim here is to show some consequences
of hypercontractivity, if it holds: more precisely we shall show
that under the same assumptions which allow to prove a homogeneous
logarithmic Sobolev inequality of the type given in Lemma
\ref{logarit}, ultracontractivity of the semigroup hold as well.

Before turning to this our first comment is that, as a consequence
of our previous results, logarithmic Sobolev inequalities must hold.

\begin{cor}\label{plapl}
Let $\{T_t\}_{t\ge0}$ be the semigroup associated to the
functional $\E$ given in (\ref{pl}), with $p>2$. Assume that it is
hypercontractive in the sense of Definition \ref{iper}, with $\dot
\alpha(0)<0$, $\dot r(0)>0$ and $\dot r(0)+2\dot\alpha(0)>0$. Let the dimension $d>0$ of the hypercontractive semigroup considered be defined by
\[
d=\dfrac{-4\dot\alpha(0)p}{(p-2)[\dot r(0)+2\dot\alpha(0)]},
\]
Then the logarithmic Sobolev
inequality
\beq\label{log4} \int_X\dfrac{u^p}{\Vert
u\Vert_p^p}\log \dfrac{u^p}{\Vert u\Vert_p^p}\rd m\le
\dfrac{d}{p}\log\left(K\dfrac{\E_p(u)}{\Vert
u\Vert_p^p}\right)\eeq holds true for a suitable constant $K$ and
for any $u\in\rL^2(X,m)$.
\end{cor}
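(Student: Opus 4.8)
The plan is to derive \eqref{log4} from Lemma \ref{logarit}, after making the energy explicit and then transporting the resulting logarithmic Sobolev inequality from the Lebesgue exponent $2$ to the natural exponent $p$.

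First I would check that the hypotheses of Lemma \ref{logarit} are met. Integrating by parts against the divergence structure $H=-\nabla^*(|\nabla u|^{p-2}\nabla u)$ gives $(u,Hu)_{\rL^2}=\int_M|\nabla u|^p\,\rd m=\E_p(u)$ on the $\rL^2$ domain of $H$; in particular $\E_p$ is exactly $p$-homogeneous, so $H$ is subhomogeneous of degree $p$ with $M=1$, and $\E_p(u)>0$ for $u\not\equiv0$ whenever no nonzero constant lies in $\rL^2$ (which I assume, e.g. $m(M)=\infty$), so that $H$ is strictly positive. Together with hypercontractivity and $\dot\alpha(0)<0$, $\dot r(0)>0$, Lemma \ref{logarit} then yields, after replacing $(u,Hu)_{\rL^2}$ by $\E_p(u)$,
\[
\int_X\frac{u^2}{\|u\|_2^2}\log\frac{u^2}{\|u\|_2^2}\,\rd m\le k_1\log\left(k_2\frac{\E_p(u)}{\|u\|_2^p}\right),\qquad k_1=-\frac{4\dot\alpha(0)}{(p-2)\dot r(0)}.
\]

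The core step is to move from this $\rL^2$-normalized inequality, whose entropy and normalization sit at exponent $2$, to \eqref{log4}, which lives at the natural exponent $p$. I would apply the display above to $v=|u|^{p/2}$ (justified for nice $u$ and extended by approximation): since $\|v\|_2=\|u\|_p^{p/2}$ and $\int_X (v^2/\|v\|_2^2)\log(v^2/\|v\|_2^2)\,\rd m$ is exactly the left-hand side of \eqref{log4}, the substitution produces the correct entropy, at the price of replacing $\E_p(u)$ by the \emph{weighted} energy $\E_p(|u|^{p/2})\simeq\int_M|u|^{(p/2-1)p}|\nabla u|^p\,\rd m$, giving
\[
\int_X\frac{|u|^p}{\|u\|_p^p}\log\frac{|u|^p}{\|u\|_p^p}\,\rd m\le k_1\log\left(k_2\frac{\E_p(|u|^{p/2})}{\|u\|_p^{p^2/2}}\right).
\]

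The main obstacle is thus to bound the weighted energy by a power of the genuine one, namely to establish the scale-invariant estimate $\E_p(|u|^{p/2})\,\|u\|_p^{-p^2/2}\le C\big(\E_p(u)\,\|u\|_p^{-p}\big)^{1/c_1}$, with $c_1=[\dot r(0)+2\dot\alpha(0)]/\dot r(0)\in(0,1)$; inserting it into the last display and using $d/p=k_1/c_1$ reproduces \eqref{log4} exactly. Both sides of this estimate are homogeneous of degree zero in $u$, so it is a genuine Gagliardo--Nirenberg-type inequality rather than a formal rearrangement; the route I would take is to obtain it as a member of the family generated, through the framework of \cite{BCLS}, by the Nash inequality of Theorem \ref{GNI1}, after checking that $W(u)=\E_p(u)^{1/p}$ is $1$-homogeneous and has the required contractivity (cf. the Remark after Theorem \ref{GNI1}). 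Finally, the standing hypothesis $\dot r(0)+2\dot\alpha(0)>0$ is exactly what makes $c_1>0$, hence the exponent $1/c_1$ and the dimension $d$ finite and positive.
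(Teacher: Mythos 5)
Your overall architecture coincides with the paper's: both proofs first apply Lemma \ref{logarit} (using $(u,Hu)_{\rL^2}=\E_p(u)$ and the exact $p$-homogeneity of $\E_p$, so that subhomogeneity holds with $M=1$) to obtain the $\rL^2$-normalized logarithmic Sobolev inequality, and then convert it into the $\rL^p$-normalized inequality \eqref{log4}. The paper performs the conversion in one stroke by citing Theorem 10.2 of \cite{BCLS} together with the contraction properties of $\E_p$. You instead unpack the conversion: the substitution $v=|u|^{p/2}$, the identity $d/p=k_1/c_1$, and the scale-invariant form of the missing estimate are all correct (I checked the exponent bookkeeping, including the consistency of $c_1=2p/(2p+d(p-2))$ with the definition of $d$), and this is an informative reduction.

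However, the step you isolate as ``the main obstacle'' is left essentially unproved, and the justification you sketch would not work as stated. The estimate $\E_p(|u|^{p/2})\,\|u\|_p^{-p^2/2}\le C\bigl(\E_p(u)\,\|u\|_p^{-p}\bigr)^{1/c_1}$ bounds a \emph{weighted energy} $\int_M|u|^{p(p-2)/2}|\nabla u|^p\,\rd m$, not a norm of $u$; it is therefore not literally a member of the Gagliardo--Nirenberg family generated, in the sense of \cite{BCLS}, by the Nash inequality of Theorem \ref{GNI1}, whose members all have the form $\|u\|_r\le C\,\E_p(u)^{\theta/p}\|u\|_s^{1-\theta}$. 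The naive route --- H\"older in the form $\int_M|u|^{p(p-2)/2}|\nabla u|^p\,\rd m\le\|u\|_\infty^{p(p-2)/2}\E_p(u)$ followed by $\|u\|_\infty\le C\,\E_p(u)^{d/p^2}\|u\|_p^{(p-d)/p}$ --- does close the exponent count, but it is only available when $p>d$; for $p\le d$ one needs the truncation/slicing machinery of \cite{BCLS}, which is precisely what their Theorem 10.2 encapsulates. So your argument becomes complete once you replace ``a member of the family generated by the Nash inequality'' with an appeal to that exponent-changing theorem (or reproduce its truncation argument); at that point you have, in effect, rederived the paper's proof with the intermediate bookkeeping made explicit.
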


\begin{proof} It suffices to prove the claim for smooth,
compactly supported functions. For such functions one has
$(u,Hu)_{\rL^2}=\E_p(u)$ so that the positivity condition of
Lemma \ref{logarit} holds.
Lemma \ref{logarit} then implies that the logarithmic Sobolev
inequality \beq\label{log3} \int_X\dfrac{u^2}{\Vert
u\Vert_2^2}\log \dfrac{u^2}{\Vert u\Vert_2^2}\rd m\le
k_1\log\left(k_2\dfrac{\E_p(u)}{\Vert u\Vert_2^p}\right)\eeq holds
true for any $u\in\rL^2(X,m)$, where the positive constants
$k_1,k_2$ are those appearing in Theorem \ref{logarit}.

The thesis is then an immediate
consequence of Theorem 10.2 of \cite{BCLS}. In fact, it suffices
to use the known contraction properties of $\E$ discussed in \cite{BCLS} and
inequality (\ref{log3}) above.
\end{proof}

It is remarkable that the above logarithmic Sobolev inequality
has exactly the same form as the {\it Euclidean} one, proved first
in \cite{CG2} if $p<d$ and later on, with sharp constants, in
\cite{DD1}, \cite{G}. In particular the proportionality constant
in front of the r.h.s. of \eqref{log4} equals $d/p$ (with the present definition
of $d$) as in the Euclidean case.

We are now ready to state the main result of this
section: roughly speaking it says that, for the semigroups
considered here, hypercontractivity implies ultracontractivity, a
property which is clearly false in the linear case. Before stating the
Theorem we comment that, by the results of \cite{CG}, the
semigroup considered has a well defined version acting as a strongly
continuous contraction semigroup on all $\rL^p$ spaces,
$p\in[1,+\infty)$.

\begin{thm}\label{plap}
Let $\{T_t\}_{t\ge0}$ be the semigroup associated to the
generalized $p$-energy functional $\E$ given in (\ref{pl}), with
$p>2$. Assume that it is hypercontractive in the sense of
Definition \ref{iper}, with $\dot \alpha(0)<0$, $\dot r(0)>0$.
Then, for any $\varrho\in [q,+\infty]$ the following supercontractive and ultracontractive bounds hold true:
\begin{equation}\label{Main-Hyper}
\|T_tu\|_{\varrho}\leq C\frac{\|u\|_q^{\gamma}}{t^{\alpha}}
\end{equation}
for all $q\in\rL^q(M,m)$, where, for finite $\varrho$,
\begin{equation}
\alpha =\frac{d}{\varrho}\frac{\varrho -q}{pq+d(p-2)},\ \ \ \
\gamma =\frac{q}{\varrho}\frac{p\varrho +d(p-2)}{pq+d(p-2)}
\end{equation}
whereas, for $\varrho=\infty$:
\begin{equation}\label{alfagamma}
\alpha =\frac{d}{pq+d(p-2)},\ \ \ \ \gamma =\frac{pq}{pq+d(p-2)}.
\end{equation}
\end{thm}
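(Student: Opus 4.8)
The engine of the proof is the homogeneous (dimension $d$) logarithmic Sobolev inequality \eqref{log4} established in Corollary \ref{plapl}, which I would combine with the energy dissipation of the gradient flow of $\E_p$. First I would observe that, working with $\int|u|^q\,\rd m$, there is no loss in assuming $u\ge0$, and that by the strong continuity and the $\rL^q$-contraction of the semigroup on every $\rL^q(M,m)$ recalled just before the statement it suffices to prove the bound on a dense class of smooth, compactly supported functions, for which all the manipulations below are legitimate and $(u,Hu)_{\rL^2}=\E_p(u)$. The final estimate for general $u\in\rL^q$ is then recovered by approximation.

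The concrete mechanism is the following dissipation identity. Writing $u(t)=T_tu$ and $\theta=(p+q-2)/p$, integration by parts gives
\[
\frac{\rd}{\rd t}\|u(t)\|_q^q=-q\,(u^{q-1},Hu)_{\rL^2}=-q(q-1)\int_M u^{q-2}|\nabla u|^p\,\rd m=-\frac{q(q-1)}{\theta^{p}}\,\E_p\!\left(u^{\theta}\right),
\]
so that the decay of the $\rL^q$ norm is governed by $\E_p$ of a \emph{power} of $u$. Since $\|u^{\theta}\|_p^p=\|u\|_{\theta p}^{\theta p}$ with $\theta p=p+q-2$, applying \eqref{log4} to $w=u^{\theta}$ converts the right-hand side into an expression involving the entropy of $u^{\theta p}$ and lower $\rL^{\theta p}$ norms. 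Here the homogeneity of $\E_p$ (subhomogeneity with constant $M=1$, already exploited in Lemma \ref{logarit}) is essential: it is precisely what makes \eqref{log4} a homogeneous inequality, hence usable after the substitution $w=u^{\theta}$.

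From this point I see two equivalent routes and would follow whichever is cleaner to write out. The first converts \eqref{log4}, by the general equivalence between logarithmic Sobolev, Nash and Sobolev inequalities of a fixed dimension established in \cite{BCLS} for energy functionals of this type (the same machinery already used for Corollary \ref{plapl}), into a Sobolev inequality $\|w\|_{dp/(d-p)}^{p}\le C\,\E_p(w)$ when $d>p$, or more generally into the associated Nash-type inequality of dimension $d$ relating $\|w\|_p$, $\E_p(w)$ and a lower $\rL^{r}$ norm; inserting this into the dissipation identity closes a differential inequality of Nash type for $\|u(t)\|_q$ whose integration from $t=0$ produces the factor $t^{-\alpha}$ and the initial factor $\|u\|_q^{\gamma}$. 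The second route is a direct Gross/Moser argument with a moving exponent: one feeds \eqref{log4} into $\frac{\rd}{\rd t}\log\|u(t)\|_{q(t)}$ and integrates, letting $q(t)\uparrow\varrho$; the endpoint $\varrho=\infty$ comes from iterating the $\rL^q\to\rL^{2q}$-type gains and passing to the limit, and finite $\varrho\in[q,\infty)$ then follows by the same iteration stopped at level $\varrho$ or by interpolation between $\rL^q$ and $\rL^\infty$. In either route the exponents $\alpha,\gamma$ are pinned down by bookkeeping and can be checked independently through the scaling $T_t(\lambda u)=\lambda\,T_{\lambda^{p-2}t}u$ of the flow, which forces the relation $\gamma=1-(p-2)\alpha$ satisfied by both \eqref{alfagamma} and the finite-$\varrho$ exponents.

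The main obstacle is the nonlinearity, which intervenes in two ways. First, the quantity controlling $\frac{\rd}{\rd t}\|u\|_q^q$ is $\E_p$ of $u^{\theta}$ rather than of $u$, and correspondingly the norm entering \eqref{log4} sits at exponent $\theta p=p+q-2\neq q$; this mismatch of exponents is exactly what forces an iteration, and it must be tracked carefully so that the constants stay uniform and the accumulated exponents assemble into the stated $\alpha$ and $\gamma$. Second, one must justify differentiating $t\mapsto\|u(t)\|_q^q$ along the nonlinear flow and the integration by parts leading to the dissipation identity; I would do this first on the smooth compactly supported class (and, if needed, for a regularized flow), then pass to the limit using the $\rL^q$-contraction properties from \cite{CG}, a step that is routine but in which the nonlinearity must be respected throughout.
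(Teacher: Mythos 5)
Your proposal is correct and follows essentially the same route as the paper, which gives no written-out argument but simply defers to the Moser/Gross iteration of \cite{BoG2}: that proof is exactly the combination of the dissipation identity $\frac{\rd}{\rd t}\|T_tu\|_q^q=-q(q-1)\theta^{-p}\E_p((T_tu)^\theta)$ with the dimension-$d$ logarithmic Sobolev inequality \eqref{log4} (your ``second route''), and your scaling check $\gamma=1-(p-2)\alpha$ via $T_t(\lambda u)=\lambda T_{\lambda^{p-2}t}u$ is consistent with both sets of exponents. No gap to report.
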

The proof of the above Theorem can be done mimicking the discussion of \cite{BoG2}, since the proofs in that
paper did not depend either on the Euclidean setting discussed there or on the explicit, unweighted form of the generator, but only on the
validity of \eqref{log4} and on the fact that $\mathcal{E}_p$ is defined in terms of a suitable \it derivation. \rm

\section{Subordination of nonlinear semigroups}
A well known method for defining, in the linear setting, a
functional calculus for generators $A$ of strongly continuous
nonexpansive semigroup $\{T_t\}_{t\ge0}$ (relative to the class of
the so called Bernstein functions) is to define a new semigroup
$\{T_t^{(f)}\}_{t\ge0}$ called the Bochner subordinated semigroup
\cite{Bo}, and then to consider its generator, which in fact is a
possible definition of $f(A)$. We briefly recall this
construction.

We first consider a {\it convolution semigroup} of probability
measures $\{\mu_t\}_{t\ge0}$ on $[0,+\infty)$. By this we mean
that
\[
\left\{\begin{aligned} &\mu_t*\mu_s=\mu_{s+t}\ \forall s,t>0\\
&\mu_t\to\delta_0\ \ \ {\rm vaguely\ as\ }t\to0
\end{aligned}
\right.
\]
where $\delta_0$ is the Dirac delta at the origin. It is well
known (see \cite{J}), that there exists a function $f$ such that
\beq ({\mathcal L}\mu_t)(x)=e^{-tf(x)}\ \ \forall t>0,\eeq where
${\mathcal L}$ denotes the Laplace transform. Moreover $f$ is
well known to be a Bernstein function, i.e. a nonnegative
$C^\infty$ function on $(0,+\infty)$ with
\[
(-1)^kf^{(k)}(x)\le0\ \ \forall k\in{\mathbb N},x>0.
\]
Clearly any such $f$ cannot diverge faster then linearly as
$x\to+\infty$.

Then we may define, given a (nonlinear) strongly continuous
nonexpansive semigroup $\{T_t\}_{t\ge0}$ on a Hilbert space
$\rL^2(X,m)$ with generator $A$, the subordinated family
\beq\label{subord} S_tu:=\int_0^{+\infty}T_su\,\mu_t(\ds), \eeq
for all positive $t$ and all $u\in\rL^2$, provided the above
Bochner integral is finite. A sufficient condition for this to
hold is that $\{T_t\}_{t\ge0}$ is contractive. If $\{T_t\}_{t\ge0}$ is nonexpansive, a particularly important case since this property is satisfied by semigroups associated to convex and lower semicontinuous functionals, it suffices that one has in addition $T_t0=0$ for all $t$ to get contractivity as well.
More generally (see \cite{CG}) one
could assume that there exists a bounded orbit for the nonexpansive semigroup
$\{T_t\}_{t\ge0}$, a condition which then implies that {\it all}
orbits are bounded. In fact if $u$ has a bounded orbit and $v\in\rL^2$ is arbitrary, the nonexpansivity of the semigroup yields
\[
\|T_tv\|_2\le \|T_tv-T_tu\|_2+\|T_tu\|_2\le \|u-v\|_2+C
\]
for a suitable $C$ independent of $t$. We shall anyway {\it assume} in the sequel
without further comment that $S_tu$ is well defined for all
positive $t$ and all $u\in\rL^2$.

Then we define the operator \beq
A_fu:=\lim_{t\to0^+}\dfrac{u-S_tu}{t}=\lim_{t\to0}\int_0^{+\infty}T_su\dfrac{\delta_0-\mu_t}{t}(\rd
s) \eeq for all those $u\in\rL^2$ for which the limit exists in
$\rL^2$. Our aim will be to show for some particularly relevant
choices of $f$ that the limit exists for all $u\in D(A)$, to prove
that it is a monotone operator and to give an explicit formula for
it.

We shall use the notation
\[ \nu_t:=\dfrac{\delta_0-\mu_t}{t}.
\]
Then $\nu_t$ is a finite Radon measure on $[0,+\infty)$.

Hereafter, we shall also use the notation ${\mathcal S}([0,+\infty))$ to indicate the space of restrictions to $[0,+\infty)$ of functions belonging to the Schwartz space ${\mathcal S}({\mathbb R})$. Moreover ${\mathcal S}^\prime([0,+\infty))$ will denote the space of all tempered distributions on the real line whose support is contained in $[0,+\infty)$. The Laplace transform ${\mathcal L}u$ of an element $u\in{\mathcal S}^\prime([0,+\infty))$ is the analytic function in the open right half-plane $\mathfrak{Re}z>0$ given by ${\mathcal L}u(z)=\hat u(iz)$, where $\hat u$ denotes the Fourier transform of $u$. In the sequel we shall sometimes simply write $\mathcal{S}$ and ${\mathcal S}^\prime$ instead of ${\mathcal S}([0,+\infty))$ and ${\mathcal S}^\prime([0,+\infty))$ since no confusion can occur.

\begin{lem}
There exists a tempered distribution $\nu\in{\mathcal
S}^\prime([0,+\infty))$ such that ${\mathcal L}\nu=f$.
\end{lem}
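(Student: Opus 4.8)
The plan is to exhibit $\nu$ explicitly through the Lévy--Khintchine representation of the Bernstein function $f$. Recall (see \cite{J}) that every Bernstein function admits the representation
\[
f(x)=a+bx+\int_0^{+\infty}\bigl(1-e^{-sx}\bigr)\,\tau(\rd s),\qquad x>0,
\]
with $a,b\ge0$ and with a Lévy measure $\tau$ on $(0,+\infty)$ satisfying $\int_0^{+\infty}\min(1,s)\,\tau(\rd s)<+\infty$. Guided by the elementary Laplace transforms $\mathcal{L}\delta_0=1$, $\mathcal{L}\delta_0'(x)=x$ and $\mathcal{L}\delta_s(x)=e^{-sx}$, I would take as candidate
\[
\nu:=a\,\delta_0+b\,\delta_0'+\int_0^{+\infty}(\delta_0-\delta_s)\,\tau(\rd s),
\]
the last integral being understood as a weak integral in $\mathcal{S}'$.

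First I would check that $\nu$ is a well defined element of $\mathcal{S}'([0,+\infty))$. Its support is contained in $[0,+\infty)$ since each term is supported at a point $s\ge0$. The only nontrivial point is the Lévy part $\nu_L:=\int_0^{+\infty}(\delta_0-\delta_s)\,\tau(\rd s)$, whose action on $\phi\in\mathcal{S}$ is $\langle\nu_L,\phi\rangle=\int_0^{+\infty}\bigl(\phi(0)-\phi(s)\bigr)\,\tau(\rd s)$. Splitting the integral at $s=1$ and using $|\phi(0)-\phi(s)|\le s\,\|\phi'\|_\infty$ for $s\le1$ together with $|\phi(0)-\phi(s)|\le2\|\phi\|_\infty$ for $s>1$, I obtain
\[
|\langle\nu_L,\phi\rangle|\le C\bigl(\|\phi\|_\infty+\|\phi'\|_\infty\bigr),\qquad C:=\max\Bigl\{2\,\tau((1,+\infty)),\,\textstyle\int_0^1 s\,\tau(\rd s)\Bigr\}<+\infty,
\]
the finiteness being exactly the Lévy integrability. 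Since the right-hand side is a continuous seminorm on $\mathcal{S}$, the distribution $\nu_L$, and hence $\nu$, is tempered.

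It then remains to verify $\mathcal{L}\nu=f$. Since $\mathcal{L}\nu$ and $f$ are both holomorphic on $\mathfrak{Re}\,z>0$, it suffices to check the identity on the positive real axis. For fixed $x>0$ the function $e^{-x\cdot}$ agrees on $[0,+\infty)$ with a Schwartz function (multiply by a smooth cutoff equal to $1$ on $[0,+\infty)$ and vanishing on $(-\infty,-1]$), so $\nu$ may legitimately be paired with it; by linearity and the weak definition of $\nu_L$ this yields $\langle\nu,e^{-x\cdot}\rangle=a+bx+\int_0^{+\infty}(1-e^{-sx})\,\tau(\rd s)=f(x)$, which is the assertion. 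This $\nu$ is, as expected, the $\mathcal{S}'$-limit of the measures $\nu_t=(\delta_0-\mu_t)/t$, consistently with $\mathcal{L}\nu_t=(1-e^{-tf})/t\to f$. The main obstacle is precisely the temperedness of the Lévy part: everything rests on the integrability $\int_0^{+\infty}\min(1,s)\,\tau(\rd s)<+\infty$, which controls the singularity of $\tau$ at the origin against the first-order vanishing of $\phi(0)-\phi(s)$. The remaining ingredients — the Lévy--Khintchine representation itself and the evaluation of $\mathcal{L}$ on the exponential through a cutoff and analytic continuation — are classical.
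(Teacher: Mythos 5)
Your proof is correct, but it takes a genuinely different route from the paper's. The paper does not construct $\nu$ at all: starting from the same L\'evy--Khintchine representation, it extends $f$ analytically to the open right half-plane, proves the linear growth bound $|f(z)|\le C(1+|z|)$ there via the elementary estimates $|1-e^{-sz}|\le\min(2,s|z|)$, and then invokes Schwartz's characterization (cited at \cite[page 306]{Scw}) of Laplace transforms of tempered distributions supported in $[0,+\infty)$ to obtain existence abstractly. You instead exhibit the candidate $\nu=a\,\delta_0+b\,\delta_0'+\int_0^{+\infty}(\delta_0-\delta_s)\,\tau(\rd s)$, check temperedness of the L\'evy part directly from $\int_0^{+\infty}\min(1,s)\,\tau(\rd s)<+\infty$, and verify the transform by pairing with a cutoff of $e^{-x\cdot}$. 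Your construction buys explicitness: it isolates exactly where the L\'evy integrability enters, and it anticipates the formula $\langle\tau_\alpha,\varphi\rangle=\frac{\alpha}{\Gamma(1-\alpha)}\int_0^\infty\frac{\varphi(0)-\varphi(s)}{s^{1+\alpha}}\,\rd s$ that the paper derives separately, by a different computation, in the next lemma for $f(x)=x^\alpha$. The paper's argument is shorter modulo the citation and uses only the growth of $f$ itself. One small remark: your closing appeal to analytic continuation presupposes that $\mathcal{L}\nu$ is holomorphic on $\mathfrak{Re}\,z>0$, which is part of the same Schwartz machinery you were otherwise avoiding; you could sidestep it by running the pairing computation directly for complex $z$ with $\mathfrak{Re}\,z>0$, the interchange of pairing and integration being justified by the same bound $|1-e^{-sz}|\le\min(2,s|z|)$.
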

\begin{proof}
A well known property of Bernstein functions shows that $f$ can
be written as \beq f(x)=a+bx+\int_{(0,+\infty)}(1-e^{-sx})\mu(\rd
s) \eeq where $a,b\ge0$ and $\mu$ is a nonnegative measure on
$(0,+\infty)$ such that \beq \int_{(0,+\infty)}
\dfrac{s}{1+s}\mu(\ds)<+\infty. \eeq The formula
\[
f(z)=a+bz+\int_{(0+\infty)}(1-e^{-sz})\mu(\ds)
\]
extends $f$ over ${\rm Re}\,z\ge0$ to an analytic function on
${\rm Re}\,z>0$. We claim that
\[
|f(z)|\le C(1+|z|)\ \ \forall z\ {\rm s.t.\ Re}\,z>0.
\]
Indeed, setting $z=x+iy$ for $x\ge0$:
\[
\begin{split}
&|1-e^{-sz}|^2=|1-e^{-sx}\cos(sy)-ie^{-sx}\sin(sy)|^2\\
&=(1-e^{-sx}\cos(sy))^2+e^{-2sx}\sin^2(sy)\\
&=1-2e^{-sx}\cos(sy)+e^{-2sx}\\
&\le 1+2e^{-sx}\left(\dfrac{s^2y^2}{2}-1\right)+e^{-2sx}\\
&=1-2e^{-sx}+e^{-2sx}+e^{-sx}s^2y^2\\
&\le(1-e^{-sx})^2+s^2y^2\le s^2(x^2+y^2)
\end{split}
\]
i.e. $|1-e^{-sz}|\le s|z|$ for all $s\ge0$, ${\rm Re}\,z\ge0$.
Since moreover $|1-e^{-sz}|\le 2$ for such $s,z$ and then
\[
\int_{(0,+\infty)}|1-e^{-sx}|\mu(\ds)\le
2\mu((1,+\infty))+|z|\int_{(0,1)}s\mu(\ds)
\]
(the latter integral being finite by the properties of $\mu$), the
claim is proved.

This implies that $f$ is the Laplace transform of a
tempered distribution $\nu\in{\mathcal
S}^\prime([0,+\infty)$ by \cite[page 306]{Scw}.
\end{proof}

\begin{lem}\label{esse}
With the above notations, the identification
$\nu=\lim_{t\to0}\nu_t$ holds true in the space ${\mathcal
S}^\prime([0,+\infty))$.
\end{lem}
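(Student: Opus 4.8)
The plan is to reduce the statement to a single dominated-convergence argument by passing to Fourier transforms and using the identity $\mathcal{L}\mu_t=e^{-tf}$. First I would record the transforms of the measures at hand. Since $\nu_t=(\delta_0-\mu_t)/t$ is a finite Radon measure and $\mathcal{L}u(z)=\hat u(iz)$, evaluating on the imaginary axis $z=-i\xi$, where the Laplace transform of a finite measure extends continuously to $\mathrm{Re}\,z\ge0$ because $|e^{-zs}|\le1$ there, gives
\[
\hat\nu_t(\xi)=\frac{1-\widehat{\mu_t}(\xi)}{t}=\frac{1-e^{-tf(-i\xi)}}{t},\qquad \hat\nu(\xi)=f(-i\xi),
\]
both being continuous functions of $\xi\in\Real$ of at most linear growth. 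Because the Fourier transform is a continuous isomorphism of $\mathcal{S}^\prime([0,+\infty))$, it suffices to prove $\hat\nu_t\to\hat\nu$ in $\mathcal{S}^\prime$, that is $\int_\Real\hat\nu_t(\xi)\psi(\xi)\,\rd\xi\to\int_\Real f(-i\xi)\psi(\xi)\,\rd\xi$ for every $\psi\in\mathcal{S}$.

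The pointwise limit is immediate: for fixed $\xi$ the number $w:=f(-i\xi)$ is finite and $(1-e^{-tw})/t\to w$ as $t\to0^+$. The decisive point is therefore a $t$-uniform domination of $\hat\nu_t$. Here I would combine two ingredients. The first is the elementary complex inequality $|1-e^{-u}|\le|u|$, valid whenever $\mathrm{Re}\,u\ge0$, which one proves by writing $1-e^{-u}=\int_0^1 u\,e^{-\sigma u}\,\rd\sigma$ and estimating $|e^{-\sigma u}|\le1$. The second is the fact that $\mathrm{Re}\,f(-i\xi)\ge0$, which follows directly from the representation $f(z)=a+bz+\int_{(0,+\infty)}(1-e^{-sz})\mu(\rd s)$ recalled above, since $\mathrm{Re}\,(1-e^{is\xi})=1-\cos(s\xi)\ge0$ while the $a$- and $bz$-terms contribute nonnegative real part on the imaginary axis.

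Applying the inequality with $u=tf(-i\xi)$, for which $\mathrm{Re}\,u\ge0$, then yields $|\hat\nu_t(\xi)|\le|f(-i\xi)|$; and the growth estimate $|f(z)|\le C(1+|z|)$ from the previous Lemma, which by continuity persists on the boundary $\mathrm{Re}\,z=0$, produces the $t$-independent bound $|\hat\nu_t(\xi)|\le C(1+|\xi|)$. Since $C(1+|\xi|)|\psi(\xi)|$ is integrable for $\psi\in\mathcal{S}$, the dominated convergence theorem gives the required convergence of the paired integrals, whence $\hat\nu_t\to\hat\nu$ in $\mathcal{S}^\prime$; applying the continuous inverse Fourier transform returns $\nu_t\to\nu$ in $\mathcal{S}^\prime([0,+\infty))$. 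I expect the main obstacle to be precisely this uniform bound: one must be careful to invoke the estimate of the preceding Lemma on the boundary of the half-plane rather than in its interior, and to notice that the sign condition $\mathrm{Re}\,f(-i\xi)\ge0$ is exactly what makes the clean bound $|1-e^{-tf(-i\xi)}|\le t\,|f(-i\xi)|$ hold uniformly in $t$.
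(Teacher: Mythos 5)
Your argument is correct, but it takes a genuinely different route from the paper's. The paper invokes a criterion of Schwartz for convergence in ${\mathcal S}^\prime([0,+\infty))$ phrased entirely in terms of the Laplace transform on the \emph{open} right half-plane: one must check (i) uniform convergence of ${\mathcal L}(\nu_t-\nu)$ on compact subsets of $\{\mathrm{Re}\,z>0\}$ and (ii) a polynomial-in-$\mathrm{Im}\,z$ bound on vertical strips $[a,b]\times\Real$, the latter obtained by an explicit expansion of $|1-tw-e^{-tw}|^2$. You instead pass to the \emph{boundary} $z=-i\xi$, identify $\hat\nu_t$ and $\hat\nu$ with continuous functions there, and run a single dominated-convergence argument against a Schwartz test function; the whole estimate collapses to $|1-e^{-u}|\le|u|$ for $\mathrm{Re}\,u\ge0$ combined with $\mathrm{Re}\,f(-i\xi)\ge0$, which yields the clean $t$-uniform bound $|\hat\nu_t(\xi)|\le|f(-i\xi)|\le C(1+|\xi|)$. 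This is more elementary (no external convergence criterion, no half-page algebraic expansion) and arguably sharper in spirit, since it isolates exactly the positivity of $\mathrm{Re}\,f$ on the imaginary axis as the reason the bound is uniform in $t$. Two small points you should make explicit: first, the identification $\hat\nu=f(-i\cdot)$ as tempered distributions is not literally the statement ${\mathcal L}\nu=f$ of the preceding Lemma (which lives in the open half-plane); it follows by one more application of the same dominated-convergence pattern, writing $\hat\nu=\lim_{\eps\downarrow0}\widehat{e^{-\eps s}\nu}=\lim_{\eps\downarrow0}f(\eps-i\cdot)$ and using the continuity of $f$ on $\{\mathrm{Re}\,z\ge0\}$ together with the uniform linear bound. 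Second, the Fourier transform is a topological isomorphism of ${\mathcal S}^\prime(\Real)$, not of ${\mathcal S}^\prime([0,+\infty))$ onto itself; what you actually use is that weak-$*$ convergence in ${\mathcal S}^\prime(\Real)$ of distributions all supported in $[0,+\infty)$ is precisely convergence in ${\mathcal S}^\prime([0,+\infty))$, which is what is wanted.
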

\begin{proof}
By \cite[page 307, Remarque 1]{Scw} we know that a
net $\Lambda_t$ converges to
zero in ${\mathcal S}^\prime([0,+\infty))$ if:
\begin{itemize}
\item $({\mathcal L}\Lambda_t)(z)$ converges to zero uniformly
over the compact sets of the open right half-plane;
\item for any compact interval $[a,b]\subset(0,+\infty)$ there
exists a polynomial $p$ depending on $a,b$ such that
\[
|({\mathcal L}\Lambda_t)(z)|\le p(y)\ \ \ \forall
z=x+iy\in[a,b]\times{\mathbb R}
\]
for all $t$ sufficiently small.
\end{itemize}
We apply this result to the net $\Lambda_t=\nu_t-\nu$. In fact,
\[
({\mathcal
L}\Lambda_t)(z)=\dfrac{1-e^{-tf(z)}}{t}-f(z)=\dfrac{1-tf(z)-e^{-tf(z)}}{t}
\]
converges to zero uniformly over compact sets of ${\mathfrak Re}\,z>0$
since $f$ is continuous. Moreover, if $tx\le1$ then, setting $w=x+iy$:
\[
\begin{split}
&|1-tw-e^{-tw}|^2=|1-tx-ity-e^{-tx}(\cos(ty)-i\sin(ty))|^2\\
&=|(1-tx-e^{-tx}\cos(ty))+i(ty-e^{-tx}\sin(ty))|^2\\
&=(1-tx)^2-2(1-tx)e^{-tx}\cos(ty)+e^{-2tx}\cos^2(ty)\\
&+t^2y^2-2tye^{-tx}\sin(ty)+e^{-2tx}\sin^2(ty)\\
&\le(1-tx)^2+2(1-tx)e^{-tx}\left(\dfrac{t^2y^2}{2}-1\right)+e^{-2tx}+t^2y^2-2tye^{-tx}\sin(ty)\\
&=\left[(1-tx)^2-2(1-tx)e^{-tx}+e^{-2tx}\right]\\
&+t^2y^2(1-tx)e^{-tx}+t^2y^2-2tye^{-tx}\sin(ty)\\
&=(1-tx-e^{-tx})^2+t^2y^2(1-tx)e^{-tx}+t^2y^2-2tye^{-tx}\sin(ty)\\
&\le c_0+c_1\vert y\vert+c_2y^2
\end{split}
\]
for suitable $c_0,c_1,c_2\in{\mathbb R}$ depending on the compact
ranges of $x$ and $t$. Finally we notice that, setting $z=\varrho e^{i\vartheta}$, $\vartheta\in(-\pi,\pi]$ and defining $z^\alpha$ in the open right half-plane as $z^\alpha:=\varrho^\alpha e^{i\alpha\vartheta}$, we have $\mathfrak{Im}(z^\alpha)=\varrho^\alpha\sin(\alpha\vartheta)$ so that $|\mathfrak{Im}(z^\alpha)|\le\varrho^\alpha\le a_0+a_1|y|$.
\end{proof}
We shall now specialize to a special and particularly relevant
choice of the function $f$. Namely we shall consider the case
$f_\alpha(x)=x^\alpha$ for $\alpha\in(0,1)$. In this case it is
easy to write down an explicit formula for $\nu$.
\begin{lem}
The function $f_\alpha(x)=x^\alpha$ for $\alpha\in(0,1)$, $x\ge0$
is the Laplace transform of the tempered distribution
$\tau_\alpha$ given by \beq\label{tau}
\langle\tau_\alpha,\varphi\rangle:=\dfrac{\alpha}{\Gamma(1-\alpha)}\int_0^\infty\dfrac{\varphi(0)-\varphi(s)}{s^{1+\alpha}}\,\rd
s \eeq for all test function $\varphi$, where $\Gamma$ indicates
the Euler Gamma function. In particular the net $\nu_t$ converges
to $\tau_\alpha$ in ${\mathcal S}^\prime$ as $t\to 0$.
\end{lem}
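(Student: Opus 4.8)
The plan is to verify directly that the functional $\tau_\alpha$ defined by \eqref{tau} is a tempered distribution supported in $[0,+\infty)$, to compute its Laplace transform and check that it equals $f_\alpha$, and finally to invoke the uniqueness of the distribution with a prescribed Laplace transform together with Lemma \ref{esse} to obtain the convergence $\nu_t\to\tau_\alpha$.

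First I would check that the integral in \eqref{tau} converges and defines a continuous linear functional on $\mathcal{S}$. Near $s=0$ the estimate $\varphi(0)-\varphi(s)=-\int_0^s\varphi'(t)\,\rd t$ gives $|\varphi(0)-\varphi(s)|\le s\sup|\varphi'|$, so the integrand is $O(s^{-\alpha})$, which is integrable since $\alpha<1$; near $s=+\infty$ the factor $s^{-1-\alpha}$ makes the integrand integrable because $1+\alpha>1$. Splitting the integral at $s=1$ and estimating the first piece by $\sup|\varphi'|\int_0^1 s^{-\alpha}\,\rd s$ and the second by $(\sup|\varphi|+|\varphi(0)|)\int_1^\infty s^{-1-\alpha}\,\rd s$ shows that $|\langle\tau_\alpha,\varphi\rangle|$ is controlled by finitely many Schwartz seminorms, hence $\tau_\alpha\in\mathcal{S}'$. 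Since $\langle\tau_\alpha,\varphi\rangle$ depends only on $\varphi(0)$ and on the restriction $\varphi|_{[0,+\infty)}$, its support is contained in $[0,+\infty)$.

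The core of the argument is the computation of $\mathcal{L}\tau_\alpha$. For ${\rm Re}\,z>0$ the function $e^{-zs}$ is not itself Schwartz, but multiplying it by a cut-off $\chi\in C^\infty$ equal to $1$ on a neighbourhood of $[0,+\infty)$ and vanishing for $s\le-1$ produces an element of $\mathcal{S}$; since $\tau_\alpha$ is supported in $[0,+\infty)$ the pairing $\langle\tau_\alpha,\chi e^{-zs}\rangle$ is independent of $\chi$ and, by the definition of the Laplace transform recalled above, equals $\mathcal{L}\tau_\alpha(z)$. Evaluating \eqref{tau} on $\chi e^{-zs}$ (so that $\varphi(0)=1$ and $\varphi(s)=e^{-zs}$ on $[0,+\infty)$) gives
\[
\mathcal{L}\tau_\alpha(z)=\frac{\alpha}{\Gamma(1-\alpha)}\int_0^\infty\frac{1-e^{-zs}}{s^{1+\alpha}}\,\rd s.
\]
Integrating by parts with $u=1-e^{-zs}$ and $\rd v=s^{-1-\alpha}\,\rd s$, the boundary terms vanish (near $s=0$ because $(1-e^{-zs})s^{-\alpha}\sim zs^{1-\alpha}\to0$, and near $s=+\infty$ because $s^{-\alpha}\to0$), leaving $\frac{z}{\alpha}\int_0^\infty s^{-\alpha}e^{-zs}\,\rd s$. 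The substitution $u=zs$ evaluates this integral to $z^{\alpha-1}\Gamma(1-\alpha)$, whence $\mathcal{L}\tau_\alpha(z)=z^\alpha=f_\alpha(z)$.

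Finally, by the injectivity of the Laplace transform on $\mathcal{S}'([0,+\infty))$, the distribution $\nu$ produced by the earlier lemma of this section (the one asserting the existence of $\nu$ with $\mathcal{L}\nu=f$), specialized to $f=f_\alpha$, must coincide with $\tau_\alpha$. The convergence $\nu_t\to\tau_\alpha$ in $\mathcal{S}'$ is then immediate from Lemma \ref{esse}. I expect the main obstacle to be the rigorous justification of inserting $e^{-zs}$ into $\tau_\alpha$, that is, the cut-off argument identifying $\langle\tau_\alpha,\chi e^{-zs}\rangle$ with $\mathcal{L}\tau_\alpha(z)$; once this is settled, the remaining integral computation is a standard Gamma-function evaluation.
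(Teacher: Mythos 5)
Your argument is correct, but it follows a genuinely different route from the paper's. The paper does not verify temperedness or compute the Laplace transform by direct pairing; instead it starts from the classical Gamma-function identity $x^{\alpha-1}=\Gamma(1-\alpha)^{-1}\int_0^\infty e^{-sx}s^{-\alpha}\,\rd s$, which exhibits $x_+^{\alpha-1}$ as the Laplace transform of the locally integrable tempered distribution $\sigma_\alpha(s)=\Gamma(1-\alpha)^{-1}s_+^{-\alpha}$, then observes that $\tau_\alpha=\sigma_\alpha'$ (a one-line distributional integration by parts yielding exactly the finite-difference formula \eqref{tau}) and concludes via the rule $\mathcal{L}(\sigma_\alpha')(x)=x\,\mathcal{L}(\sigma_\alpha)(x)=x^\alpha$. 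Your integration by parts with $u=1-e^{-zs}$, $\rd v=s^{-1-\alpha}\rd s$ is precisely the same algebra, carried out under the integral sign for the particular test function $e^{-zs}$ rather than once and for all at the distributional level. What the paper's route buys is economy: the Laplace transform of the locally integrable $\sigma_\alpha$ is classical, and the derivative rule is formal, so no cut-off argument is needed to make sense of pairing a tempered distribution with the non-Schwartz function $e^{-zs}$ — which you correctly identify as the delicate point of your approach (and which you handle adequately via the support property). What your route buys is self-containedness: you actually verify that $\tau_\alpha$ is a continuous functional on $\mathcal{S}$ with support in $[0,+\infty)$, a point the paper leaves implicit. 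Two minor remarks: the substitution $u=zs$ for complex $z$ with $\mathfrak{Re}\,z>0$ should strictly be justified by a contour rotation or by doing the computation for real $z>0$ and invoking analyticity of both sides; and both your proof and the paper's rely, for the final convergence claim, on the injectivity of the Laplace transform on $\mathcal{S}'([0,+\infty))$, which you state explicitly and the paper uses tacitly.
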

\begin{proof}
By the definition of the Euler Gamma function:
\[
x^{\alpha-1}=\dfrac{1}{\Gamma(1-\alpha)}\int_0^{\infty}\dfrac{e^{-sx}}{s^{\alpha}}\,\ds,\ \ \forall x>0.
\]
Thus the function $x_+^{\alpha-1}$ is the Laplace transform of the tempered distribution $\sigma_\alpha(s)=\Gamma(1-\alpha)^{-1}s_+^{-\alpha}$. Integrating by parts one has $\tau_\alpha=\sigma_\alpha^\prime$. Thus
\[
{\mathcal L}(\tau_\alpha)(x)=x{\mathcal L}(\sigma_\alpha)(x)=x^\alpha\ \ \ \forall x>0.
\]
It is then immediate to check that
\[
\langle
(s^{-\alpha})^\prime,\varphi\rangle=\alpha\int_{0}^\infty\dfrac{\varphi(0)-\varphi(s)}{s^{1+\alpha}}\,\rd
s.
\]
\end{proof}
Since we aim at making $\tau_\alpha$ act on the function $s\mapsto
T_su$ for $u$ in the L$^2$-domain $D(A)$ of the generator $A$, we have to prove
that the convergence of $\nu_t$ to $\tau_\alpha$ takes place in a
stronger sense. To this end we introduce the space
\[
E=\left\{\psi\in C_b([0,+\infty))\  \rm s.t.\it\  \lim_{t\downarrow0}\frac{\psi(t)-\psi(0)}t\rm \ is\ finite\right\}.
\]
Since $\nu_t$ are finite measures on $[0,+\infty)$, $\nu_t(\psi)$ makes sense for all bounded continuous functions $\psi$. Notice also that the r.h.s. of formula \eqref{tau} still makes sense for all $\psi\in E$, thus defining a linear functional on $E$ still denoted by $\tau_\alpha$.
\begin{lem}
For all $\psi\in E$ one has
\beq
\lim_{t\downarrow0}\nu_t(\psi)=\frac{\alpha}{\Gamma(1-\alpha)}\int_0^{+\infty}\frac{\psi(0)-\psi(s)}{s^{1+\alpha}}\ds:=\tau_\alpha(\psi).
\eeq
\end{lem}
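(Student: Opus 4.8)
The plan is to realise both sides as integrals of the single function $G(s):=\psi(0)-\psi(s)$ against measures on $(0,+\infty)$, and then to reduce the convergence to the distributional convergence $\nu_t\to\tau_\alpha$ already proved. Writing $m_t:=\tfrac1t\mu_t$ and $\lambda(\rd s):=\tfrac{\alpha}{\Gamma(1-\alpha)}s^{-1-\alpha}\,\rd s$ (the measure implicit in \eqref{tau}), and using $\nu_t=(\delta_0-\mu_t)/t$ together with $G(0)=0$, one has $\nu_t(\psi)=\int_{(0,+\infty)}G\,\rd m_t$ and $\tau_\alpha(\psi)=\int_{(0,+\infty)}G\,\rd\lambda$. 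For $\psi\in E$ both integrals are finite: near $0$ the finiteness of the right derivative gives $|G(s)|\le Cs$, which controls the $s^{-1-\alpha}$ singularity of $\lambda$, while $G$ is bounded at infinity. I then split $(0,+\infty)$ by a smooth partition of unity $1=\eta_0+\eta_1+\eta_2$, with $\mathrm{supp}\,\eta_0\subset[0,2a]$, $\mathrm{supp}\,\eta_2\subset[b,+\infty)$ and $\eta_1$ compactly supported in $(0,+\infty)$, and estimate the three contributions separately.

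The key estimate is a tail bound for $m_t$ that is \emph{uniform} in $t$. From the elementary inequality $\mathbf 1_{\{s>b\}}\le(1-e^{-xs})/(1-e^{-xb})$, valid for every $x>0$, integrating against $\mu_t$ and using $\int_0^{+\infty}(1-e^{-xs})\mu_t(\rd s)=1-e^{-tx^\alpha}$ together with $1-e^{-y}\le y$ gives
\[
\frac1t\,\mu_t\big((b,+\infty)\big)\le\frac{1-e^{-tx^\alpha}}{t\,(1-e^{-xb})}\le\frac{x^\alpha}{1-e^{-xb}},\qquad x>0 .
\]
Choosing $x=c/b$ and minimising in $c$ yields $m_t((b,+\infty))\le C_\alpha b^{-\alpha}$ with $C_\alpha:=\inf_{c>0}c^\alpha/(1-e^{-c})<+\infty$, uniformly in $t>0$. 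Consequently $m_t$ assigns uniformly bounded mass to every compact subset of $(0,+\infty)$, and a layer-cake computation $\int_{(0,a]}s\,\mu_t(\rd s)\le\int_0^a\mu_t((u,+\infty))\,\rd u$ combined with the same bound gives $\tfrac1t\int_{(0,2a]}s\,\mu_t(\rd s)\le \tfrac{C_\alpha}{1-\alpha}(2a)^{1-\alpha}$, again uniformly in $t$.

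With these bounds the three pieces are handled as follows. For the tail piece, $|G|\le 2\|\psi\|_\infty$ gives $|\int G\eta_2\,\rd m_t|\le 2\|\psi\|_\infty C_\alpha b^{-\alpha}$, uniformly in $t$, and the corresponding $\lambda$-integral is small for $b$ large by integrability of $\lambda$ at infinity. For the piece near $0$, $|G\eta_0|\le Cs$ and the near-$0$ bound give a contribution $\le C\,\tfrac{C_\alpha}{1-\alpha}(2a)^{1-\alpha}$, with the $\lambda$-version controlled by $\int_0^{2a}s\cdot s^{-1-\alpha}\,\rd s$; both vanish as $a\to0$, uniformly in $t$. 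For the middle piece I use the distributional convergence: any $g\in C_c^\infty((0,+\infty))\subset\mathcal S([0,+\infty))$ satisfies $g(0)=0$, whence $\nu_t(g)=-\int g\,\rd m_t$ and $\tau_\alpha(g)=-\int g\,\rd\lambda$, so $\nu_t\to\tau_\alpha$ in $\mathcal S'$ forces $m_t\to\lambda$ vaguely on $(0,+\infty)$; since $G\eta_1\in C_c((0,+\infty))$ and the masses $m_t(\mathrm{supp}\,\eta_1)$ are uniformly bounded, approximating $G\eta_1$ uniformly by $C_c^\infty$ functions with fixed compact support yields $\int G\eta_1\,\rd m_t\to\int G\eta_1\,\rd\lambda$.

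To conclude, given $\eps>0$ I first fix $a$ small and $b$ large so that the $\eta_0$- and $\eta_2$-contributions to $\nu_t(\psi)-\tau_\alpha(\psi)$ are each below $\eps/3$ for all $t$, and then let $t\to0$ in the middle piece. \textbf{The main obstacle is the behaviour at infinity}: the measures $m_t=\tfrac1t\mu_t$ have total mass $1/t\to+\infty$ and $\psi$ is merely bounded, so no fixed dominating measure is available and one cannot simply pass to the limit. The Markov-type inequality fed with the Laplace transform $e^{-tx^\alpha}$ is exactly what supplies the uniform tail control that overcomes this; the hypothesis $\psi\in E$, i.e. the finite right derivative at $0$, plays the symmetric role of taming the non-integrable singularity of $\lambda$ near the origin.
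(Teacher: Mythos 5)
Your proof is correct, but it follows a genuinely different route from the paper's. The paper works with the explicit density $g_t(x)=t^{-1/\alpha}g_1(xt^{-1/\alpha})$ of $\mu_t$ and the Pollard asymptotic $g_1(x)\sim c_1x^{-1-\alpha}$ to extract uniform-in-$t$ moment bounds $\tfrac1t\int_0^1 g_t(x)\,x\,\rd x\le C_1$ and $\tfrac1t\int_1^\infty g_t(x)\,x^{\alpha/2}\,\rd x\le C_2$; it then packages these as the equicontinuity estimate $|\nu_t(\varphi-\psi)|\le C\,p(\varphi-\psi)$ for the seminorm $p(\psi)=\sup_{x>0}|(\psi(x)-\psi(0))/(x\wedge x^{\alpha/2})|$, proves the analogous bound for $\tau_\alpha$, and concludes by approximating $\psi\in E$ by Schwartz functions in the $p$-topology, using the $\mathcal S'$ convergence of Lemma \ref{esse} on the approximants. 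You obtain the same uniform control directly from the Laplace transform $e^{-tx^\alpha}$ via the Chernoff-type bound $\mathbf 1_{\{s>b\}}\le(1-e^{-xs})/(1-e^{-xb})$, which yields $\tfrac1t\mu_t((b,+\infty))\le C_\alpha b^{-\alpha}$ and, by layer-cake, the first-moment bound near the origin; you then run a three-epsilon argument with a partition of unity, needing the distributional convergence only against $C_c^\infty((0,+\infty))$ test functions to get vague convergence of $\tfrac1t\mu_t$ to $\tfrac{\alpha}{\Gamma(1-\alpha)}s^{-1-\alpha}\rd s$ on the middle compact piece. The two arguments are structurally parallel (uniform bounds tuned to the singularity of $\tau_\alpha$ at $0$ and at $\infty$, plus convergence on a dense or localized class), but yours dispenses entirely with the existence, scaling and tail asymptotics of the density of $\mu_t$ (the citation of \cite{P}), replacing them with an elementary Markov inequality, which makes it more self-contained; the paper's seminorm formulation, on the other hand, isolates the equicontinuity of $\{\nu_t\}$ on all of $E$ as a reusable statement. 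Your identification of the real obstruction (the exploding total mass $1/t$ of $\tfrac1t\mu_t$ against a merely bounded $\psi$ at infinity, symmetric to the non-integrable singularity of $s^{-1-\alpha}$ at the origin tamed by $\psi\in E$) is exactly right.
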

\begin{proof}
We denote by $g_t(x)$ the continuous function defining the density of $\mu_t$ with respect to the Lebesgue measure \cite{J}. It is a standard fact that $g_t(x)=t^{-1/\alpha}g_1(xt^{-1/\alpha})$ for all $x\ge0$, $t>0$. We shall write $g$ instead of $g_1$ from now on. We shall need a property of $g$, namely the fact that $g(x)\sim c_1x^{-1-\alpha}$ as $x\to+\infty$, where $c_1$ is a suitable positive constant, whose explicit value is known but will not be useful in the sequel \cite{P}. It follows that $\int_0^x yg(y)\rd y\sim c_2x^{1-\alpha}$ as $x\to+\infty$. This latter fact can also be proved directly by using only the expression of the Laplace transform, noticing that $\frac{\rd}{\rd x}{\mathcal L}(\mu_t)(x)=-{\mathcal L}(x\mu_t)(x)$ and using a Tauberian Theorem.

Take now $\psi\in E, \varphi\in {\mathcal S}$ and write
\[
\begin{split}
\int_0^{+\infty}\nu_t(\rd x)[\varphi(x)-\psi(x)]&=\frac1t\left[\varphi(0)-\psi(0)-\int_0^{+\infty}\hskip-10pt g_t(x)[\varphi(x)-\psi(x)]\rd x\right]\\
&=\frac1t\int_0^{+\infty}\hskip-10pt g_t(x)[(\varphi(0)-\varphi(x))-(\psi(0)-\psi(x))]\rd x\\
&=\frac1t\int_0^{1} g_t(x)[(\varphi(0)-\varphi(x))-(\psi(0)-\psi(x))]\rd x\\
&+\frac1t\int_1^{+\infty}\hskip-10pt g_t(x)[(\varphi(0)-\varphi(x))-(\psi(0)-\psi(x))]\rd x.
\end{split}
\]
Define the seminorm $p:E\to[0,+\infty)$ by:
\[
p(\psi):=\sup_{x>0}\left|\frac{\psi(x)-\psi(0)}{x\wedge x^{\alpha/2}}\right|.
\]
Then we have, for a suitable constant $C_1>0$:
\[
\begin{split}
&\left|\frac1t\int_0^{1} g_t(x)[(\varphi(0)-\varphi(x))-(\psi(0)-\psi(x))]\rd x\right|\\
&\le\left(\sup_{x\in(0,1)}\left|\frac{(\varphi(0)-\varphi(x))-(\psi(0)-\psi(x))}{x}\right|\right)\frac1t\int_0^1g_t(x)x\rd x\\
&\le p(\varphi-\psi)\frac1t\int_0^1g_t(x)x\rd x\\
&=p(\varphi-\psi)\frac1{t^{1+1/\alpha}}\int_0^1g(xt^{-1/\alpha})x\rd x\\
&=p(\varphi-\psi)\frac1{t^{1-1/\alpha}}\int_0^{t^{-1/\alpha}}\hskip-15pt g(y)y\rd y\\
&\le C_1p(\varphi-\psi).
\end{split}
\]
for all $t$ sufficiently small. Similarly, for a suitable constant $C_2>0$:
\[
\begin{split}
&\left|\frac1t\int_1^{+\infty}\hskip-10pt g_t(x)[(\varphi(0)-\varphi(x))-(\psi(0)-\psi(x))]\rd x\right|\\
&\le\left(\sup_{x\ge1}\left|\frac{(\varphi(0)-\varphi(x))-(\psi(0)-\psi(x))}{x^{\alpha/2}}\right|\right)\frac1t\int_1^{+\infty}\hskip-10pt
g_t(x)x^{\alpha/2}\rd x\\
&\le p(\varphi-\psi)\frac1t\int_1^{+\infty}\hskip-10pt g_t(x)x^{\alpha/2}\rd x\\
&=p(\varphi-\psi)\frac1{t^{1+1/\alpha}}\int_1^{+\infty}\hskip-10pt g(xt^{-1/\alpha})x^{\alpha/2}\rd x\\
&=p(\varphi-\psi)\frac1{t^{1/2}}\int_{t^{-1/\alpha}}^{+\infty}\hskip-10pt g(y)y^{\alpha/2}\rd y\\
&\le C_2p(\varphi-\psi),
\end{split}
\]
again for all $t$ sufficiently small so that, for all such values of $t$:
\beq
\left|\int_0^{+\infty}\nu_t(\rd x)[\varphi(x)-\psi(x)]\right|\le Cp(\varphi-\psi)
\eeq
for a suitable positive constant $C$. Proceeding in a very similar manner allows to prove the inequality
\beq
\left|\tau_\alpha(\varphi(x)-\psi(x))\right|\le Cp(\varphi-\psi).
\eeq
Now we notice that
\[\begin{split}
|\nu_t(\psi)-\tau_\alpha(\psi)|&\le |\nu_t(\varphi)-\tau_\alpha(\varphi)|+|\nu_t(\psi-\varphi)|+|\tau_\alpha(\psi-\varphi)|\\
&\le |\nu_t(\varphi)-\tau_\alpha(\varphi)|+Cp(\psi-\varphi)
\end{split}
\]
so that, by Lemma \ref{esse}:
\[
\lim_{t\downarrow0}|\nu_t(\psi)-\tau_\alpha(\psi)|\le Cp(\psi-\varphi).
\]
Therefore we get $\limsup_{t\downarrow0}|\nu_t(\psi)-\tau_\alpha(\psi)|=0$ for all $\psi\in E$ provided we prove that for all positive $\varepsilon$ there exists a function $\varphi\in{\mathcal S}$ with $p(\psi-\varphi)\le\varepsilon$. It suffices to consider only the case $\psi(0)=0$ so that $\varphi(0)=0$ can be assumed as well. Then, let $h\in C^{\infty}([0,+\infty))$ be such that $0<h(x)\le x\wedge x^{\alpha/2}$ for all $x>0$ and $h(x)=x\wedge x^{\alpha}$ for $x\in[0,1/2]\cup[2,+\infty)$. We have $p(\psi-\varphi)\le q(\psi-\varphi):=\sup_{x\ge0}h(x)^{-1}|\psi(x)-\varphi(x)|$ so that it suffices to show that $q(\psi-\varphi)$ is small for $\varphi\in\mathcal{S}$ chosen appropriately. Notice that $h(x)^{-1}\psi(x)$ can be extended to a function belonging to $C_0([0,+\infty))$ since $\psi(0)=0$ and the right derivative of $\psi$ at $t=0$ exists. In turn this follows from the fact that $\mathcal{S}([0,+\infty))$ is dense in $C_0([0,+\infty))$ (the space of continuous functions $g$ on $[0,+\infty)$ such that $\lim_{x\to+\infty}g(x)=0$) in the uniform topology so that, if $f\in \mathcal{S}$ is a function close to the function $h(x)^{-1}\psi(x)\in C_0([0,+\infty))$ in the uniform topology, the function $\varphi=fh$ belongs to $\mathcal{S}$ and is close to $\psi$ in the topology associated to the norm $q$.
\end{proof}
We have now all the ingredients to prove the following result.
\begin{thm}\label{potenza}
Let $\{T_t\}_{t\ge0}$ be a (nonlinear) strongly continuous
nonexpansive semigroup on a Hilbert space $\rL^2(X,m)$, with
generator $A$. Suppose that $\{T_t\}_{t\ge0}$ has a bounded orbit.
Let $\{\mu^{(\alpha)}_t:t\ge0\}$ be the convolution semigroup
associated to the Bernstein function $f_\alpha(x)=x^\alpha$ for
$x\ge0$, where $\alpha\in(0,1]$. Let finally $S_t$ be the
subordination of $\{T_t\}_{t\ge0}$ defined by \beq\label{subord2}
S_tu:=\int_0^{+\infty}T_su\,\mu^{(\alpha)}_t(\ds),
\eeq for all $t\ge0$ and all $u\in\rL^2$. Then the right
derivative of $S_tu$ exists at $t=0$ for all $u\in D(A)$ and,
denoting it by $A^\alpha u$, the formula \beq
\label{generatore}A^\alpha
u=\dfrac{\alpha}{\Gamma(1-\alpha)}\int_0^\infty\dfrac{u-T_su}{s^{1+\alpha}}\rd
s \eeq holds for any $u\in D(A)$. Moreover the operator
$A^\alpha:D(A)\to\rL^2$ is monotone, so that it admits a
maximally monotone extension which then defines a (nonlinear)
strongly continuous nonexpansive semigroup
$\{T_t^\alpha\}_{t\ge0}$ on $\rL^2$, called the
$\alpha$-subordinated semigroup.
\end{thm}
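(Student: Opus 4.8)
The plan is to establish the three assertions in turn: the explicit formula \eqref{generatore}, the monotonicity of $A^\alpha$, and the resulting generation of a semigroup. The formula is the substantial part; the other two are comparatively soft.

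First I would rewrite the difference quotient as a single action of $\nu_t$ on the orbit. Setting $\Psi(s):=T_su$, one has
\[
\frac{u-S_tu}{t}=\int_0^{+\infty}T_su\,\nu_t(\ds)=\nu_t(\Psi),
\]
so the claim is that $\nu_t(\Psi)\to\tau_\alpha(\Psi)$ strongly in $\rL^2$, where $\tau_\alpha(\Psi)=\frac{\alpha}{\Gamma(1-\alpha)}\int_0^{+\infty}\frac{u-T_su}{s^{1+\alpha}}\,\rd s$; this is precisely the preceding Lemma, but for the $\rL^2$-valued curve $\Psi$ in place of a scalar $\psi\in E$. The key observation is that $\Psi$ lies in the vector analogue of $E$: for $u\in D(A)$ the limit $\lim_{s\to0^+}(u-T_su)/s=Au$ exists in $\rL^2$, so $\|\Psi(s)-\Psi(0)\|_2=O(s)$ near the origin, while the bounded-orbit hypothesis gives $\sup_{s}\|T_su\|_2<+\infty$ and hence $\|\Psi(s)-\Psi(0)\|_2/s^{\alpha/2}\to0$ as $s\to+\infty$. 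Consequently the vector seminorm $\tilde p(\Phi):=\sup_{s>0}\|\Phi(s)-\Phi(0)\|_2/(s\wedge s^{\alpha/2})$ is finite on $\Psi$, the target Bochner integral converges absolutely (integrability at $0$ from the $O(s)$ bound, at infinity from boundedness of the orbit), and the two estimates of the preceding proof --- the split of $\int_0^{+\infty}\nu_t(\rd x)[\Phi(x)-\Psi(x)]$ over $(0,1)$ and $(1,+\infty)$, controlled via $\tilde p$ and the tail asymptotics $g(x)\sim c_1x^{-1-\alpha}$ --- carry over verbatim with $|\cdot|$ replaced by $\|\cdot\|_2$, yielding the uniform bounds $\|\nu_t(\Psi-\Phi)\|_2\le C\,\tilde p(\Psi-\Phi)$ and $\|\tau_\alpha(\Psi-\Phi)\|_2\le C\,\tilde p(\Psi-\Phi)$.

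The one genuinely new ingredient, and the step I expect to be the main obstacle, is the approximation that replaces the scalar use of density of $\mathcal{S}$ in $C_0$. One must approximate $\Psi$, in the seminorm $\tilde p$, by finite sums $\Phi=\sum_j\varphi_j(\cdot)\,w_j$ with $\varphi_j\in\mathcal{S}$ and $w_j\in\rL^2$; for such $\Phi$ the scalar convergence gives $\nu_t(\Phi)=\sum_j\nu_t(\varphi_j)w_j\to\sum_j\tau_\alpha(\varphi_j)w_j=\tau_\alpha(\Phi)$ by Lemma \ref{esse}. This density holds because $s\mapsto(s\wedge s^{\alpha/2})^{-1}(\Psi(s)-\Psi(0))$ extends to an element of $C_0([0,+\infty);\rL^2)$ --- using right differentiability at $0$ and the decay noted above --- and $\mathcal{S}\otimes\rL^2$ is dense there. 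Combining this with the two uniform bounds and the triangle inequality, exactly as in the scalar proof, gives $\limsup_{t\downarrow0}\|\nu_t(\Psi)-\tau_\alpha(\Psi)\|_2\le C\tilde p(\Psi-\Phi)$ for every admissible $\Phi$, whence $\nu_t(\Psi)\to\tau_\alpha(\Psi)$ and the formula \eqref{generatore} follows.

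Monotonicity is then immediate from \eqref{generatore}. For $u,v\in D(A)$,
\[
(A^\alpha u-A^\alpha v,u-v)_{\rL^2}=\frac{\alpha}{\Gamma(1-\alpha)}\int_0^{+\infty}\frac{\|u-v\|_2^2-(T_su-T_sv,u-v)_{\rL^2}}{s^{1+\alpha}}\,\rd s,
\]
and each integrand is nonnegative since, by Cauchy--Schwarz and nonexpansivity, $(T_su-T_sv,u-v)_{\rL^2}\le\|T_su-T_sv\|_2\,\|u-v\|_2\le\|u-v\|_2^2$. Thus $A^\alpha$ is monotone on $D(A)$, and being monotone it admits, by Zorn's lemma, a maximal monotone extension $\widetilde{A^\alpha}$. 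By Minty's theorem the range condition $R(I+\lambda\widetilde{A^\alpha})=\rL^2$ holds for all $\lambda>0$, so by the standard generation theorem for maximal monotone operators on a Hilbert space (see \cite{B}) $\widetilde{A^\alpha}$ generates a strongly continuous nonexpansive semigroup on $\overline{D(\widetilde{A^\alpha})}$; since $D(A)\subseteq D(A^\alpha)\subseteq D(\widetilde{A^\alpha})$ and $D(A)$ is dense in $\rL^2$, this semigroup is defined on all of $\rL^2$. (For $\alpha=1$ everything is trivial, $S_t=T_t$ and $A^\alpha=A$.)
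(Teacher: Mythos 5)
Your proposal is correct and follows essentially the same route as the paper: the formula comes from applying the preceding convergence lemma ($\nu_t\to\tau_\alpha$ on the class $E$) to the curve $s\mapsto T_su$, which for $u\in D(A)$ is bounded and right-differentiable at $0$ by Brezis's theory, and monotonicity plus the passage to a maximal monotone extension are handled exactly as in the paper (Cauchy--Schwarz with nonexpansivity, then \cite[Corollary 2.1]{B}). The only difference is one of detail: the paper treats the vector-valued application of the scalar lemma as immediate, whereas you explicitly rerun the estimates for $\rL^2$-valued curves and supply the density of $\mathcal{S}\otimes\rL^2$ in $C_0([0,+\infty);\rL^2)$, which is a legitimate filling-in of a step the authors leave implicit.
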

\begin{proof}
It suffices to notice that the assumption that $\{T_t\}_{t\ge0}$
has a bounded orbit and the nonexpansivity of $\{T_t\}_{t\ge0}$
imply that all orbits are bounded, so that $S_t$ is
well defined and that, by \cite[Theorem 3.1]{B}, the map $s\to T_s u$
is Lipschitz continuous for any fixed $u\in D(A)$ and has a right derivative at $s=0$ for any fixed $u\in D(A)$.

The monotonicity of $A^\alpha$ is a consequence of:
\[
\begin{split}
&(A^\alpha u-A^\alpha v,u-v)=c\int_0^{+\infty}\dfrac{\rd
s}{s^{1+\alpha}}(u-v-(T_su-T_sv),u-v))\\
&=c\int_0^{+\infty}\dfrac{\rd
s}{s^{1+\alpha}}\left[\|u-v\|_2^2-(T_su-T_sv,u-v)\right]\\
\end{split}
\]
and of the fact that
\[
(T_su-T_sv,u-v)\le |(T_su-T_sv,u-v)|\le
\|T_su-T_sv\|_2\|u-v\|_2\le \|u-v\|_2^2
\]
by the nonexpansivity of $\{T_s\}$.

The latter statement follows by \cite[Corollary 2.1]{B}.
\end{proof}
\begin{cor}
The inequality
\[
\|A^\alpha u\|_2\le \dfrac{\|A_\circ
u\|^\alpha_2\sup_{s\ge0}\|u-T_su\|_2^{1-\alpha}}{(1-\alpha)\Gamma(1-\alpha)}
\]
where $A_\circ$ is the principal section of $A$ in the sense of
\cite{B}.
\end{cor}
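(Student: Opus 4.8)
The plan is to bound $\|A^\alpha u\|_2$ by pulling the norm inside the Bochner integral in formula \eqref{generatore} and then splitting the resulting integral at a free parameter $R>0$ to be optimized at the end. Writing $M:=\sup_{s\ge0}\|u-T_su\|_2$, which is finite because every orbit is bounded under the hypotheses of Theorem \ref{potenza}, the triangle inequality for the integral gives
\beq
\|A^\alpha u\|_2\le\frac{\alpha}{\Gamma(1-\alpha)}\int_0^{+\infty}\frac{\|u-T_su\|_2}{s^{1+\alpha}}\rd s.
\eeq

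The two regimes of integration are controlled by two different bounds on $\|u-T_su\|_2$. For small $s$ I would use the Lipschitz estimate $\|u-T_su\|_2\le s\|A_\circ u\|_2$, valid for $u\in D(A)$: indeed, by \cite[Theorem 3.1]{B} the orbit $s\mapsto T_su$ is differentiable from the right with $\frac{\rd^+}{\rd s}T_su=-A_\circ(T_su)$ and $\|A_\circ(T_su)\|_2\le\|A_\circ u\|_2$, whence $\|u-T_su\|_2\le\int_0^s\|A_\circ(T_\sigma u)\|_2\rd\sigma\le s\|A_\circ u\|_2$. For large $s$ I would simply use the uniform bound $\|u-T_su\|_2\le M$. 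This yields
\beq
\|A^\alpha u\|_2\le\frac{\alpha}{\Gamma(1-\alpha)}\left[\|A_\circ u\|_2\int_0^R s^{-\alpha}\rd s+M\int_R^{+\infty}s^{-1-\alpha}\rd s\right]=\frac{\alpha}{\Gamma(1-\alpha)}\left[\frac{\|A_\circ u\|_2\,R^{1-\alpha}}{1-\alpha}+\frac{M\,R^{-\alpha}}{\alpha}\right].
\eeq

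Finally I would optimize the right-hand side over $R>0$. Setting $a:=\|A_\circ u\|_2$ and $b:=M$, the bracketed function $g(R)=\frac{a}{1-\alpha}R^{1-\alpha}+\frac{b}{\alpha}R^{-\alpha}$ has derivative $g'(R)=aR^{-\alpha}-bR^{-\alpha-1}$, which vanishes at $R=b/a$; substituting this value gives $g(b/a)=a^\alpha b^{1-\alpha}\left(\frac{1}{1-\alpha}+\frac{1}{\alpha}\right)=\frac{a^\alpha b^{1-\alpha}}{\alpha(1-\alpha)}$, and multiplying through by $\frac{\alpha}{\Gamma(1-\alpha)}$ produces exactly the claimed constant $[(1-\alpha)\Gamma(1-\alpha)]^{-1}$ in front of $\|A_\circ u\|_2^\alpha M^{1-\alpha}$. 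The degenerate case $\|A_\circ u\|_2=0$ is trivial, since then $T_su=u$ for all $s$, so that $M=0$ and both sides vanish.

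I anticipate that the only genuinely delicate point is the justification of the Lipschitz bound $\|u-T_su\|_2\le s\|A_\circ u\|_2$ with the sharp constant $\|A_\circ u\|_2$: this rests on the contraction property of the minimal section along the orbit, namely that $s\mapsto\|A_\circ(T_su)\|_2$ is nonincreasing, a standard but essential ingredient from Brezis' theory that must be invoked cleanly. Everything else reduces to elementary one-variable calculus, and the precise shape of the stated constant is entirely dictated by the optimization over $R$.
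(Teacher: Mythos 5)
Your argument is correct and coincides with the paper's own proof: both pull the norm inside the Bochner integral, split at a free parameter, use the Lipschitz bound $\|u-T_su\|_2\le s\|A_\circ u\|_2$ from Brezis' Theorem 3.1 on the near part and the uniform bound $\sup_{s\ge0}\|u-T_su\|_2$ on the far part, and then optimize over the splitting point. You merely carry out the optimization and the justification of the Lipschitz estimate more explicitly than the paper does.
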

\begin{proof}
One has, for all positive $x$:
\[
\begin{split}
&\dfrac{\Gamma(1-\alpha)}{\alpha}\|A^\alpha u\|_2\le
\int_0^\infty\frac{\|u-T_su\|_2}{s^{1+\alpha}}\rd
s\\
&=\int_0^x\frac{\|u-T_su\|_2}{s^{1+\alpha}}\ds+\int_x^\infty\frac{\|u-T_su\|_2}{s^{1+\alpha}}\ds\\
&\le \|A_\circ u\|_2\int_0^xs^{-\alpha}\rd
s+\sup_{s\ge0}\|u-T_su\|_2\int_x^\infty s^{-(1+\alpha)}\rd s\\
&=\|A_\circ
u\|_2\dfrac{x^{1-\alpha}}{1-\alpha}+\sup_{s\ge0}\|u-T_su\|_2\dfrac{x^{-\alpha}}{\alpha}.
\end{split}
\]
where we have used the fact (see \cite[Theorem 3.1, item (2)]{B}) that
$\|\rd T_su/\ds\|_{\rL^\infty((0,+\infty);\rL^2)}\le \|A_\circ u
\|_2$ for all $u\in D(A)$. Optimizing over $x>0$ we get the
assertion.
\end{proof}
\section{Nash estimates for generators of subordinated semigroups.}
Our aim in this section will be to use the above construction of
nonlinear subordinated semigroups in the specific setting of
section 4, i.e. when $\{T_t\}_{t\ge0}$ is the nonlinear semigroup
generated by the generalized $p$-Laplacian introduced in such
section. We shall show that, when such semigroup is
hypercontractive, its subordination $\{T_t^\alpha\}_{t\ge0}$ is
hypercontractive as well, so that as a consequence we shall note
that Nash-type inequalities hold for $A^\alpha$ too.

We start with the following Lemma.
\begin{lem}
Let $\{\mu_t:t\ge0\}$ be the convolution semigroup
associated to the Bernstein function $f_\alpha(x)=x^\alpha$ for
$x\ge0$, where $\alpha\in(0,1]$ is fixed. Then, for any $\beta>0$ and all
$t>0$ the integral $\int_0^{+\infty}\mu_t(\ds)s^{-\beta}$ is
finite.
\end{lem}
\begin{proof}
It is clear, since each $\mu_t$ is a probability measure, that it
suffices to prove the claim for $\beta\ge1$. For such $\beta$ an
elementary induction argument and the fact that $e^{-tx^\alpha}$
is the Laplace transform of $\mu_t$ show that, if $[\cdot]$
denotes the integer part of a real number:
\[
\int_0^{+\infty}\rd
x\,x^{\beta-1}e^{-tx^\alpha}=\left(\int_0^{+\infty}\rd
y\,y^{\beta-[\beta]}e^{-y}\right)\int_0^{+\infty}\mu_t(\rd
s)\dfrac{[\beta-1]!}{s^\beta}
\]
so that in particular the latter integral in the r.h.s. is finite.
\end{proof}
As a consequence of the above Lemma and of the results of section
2 we have:
\begin{lem}\label{boh}
Let $\{T_t\}_{t\ge0}$ be the semigroup associated to the
$p$-energy functional $\E$ given in (\ref{pl}), with $p>2$.
Assume that it is hypercontractive in the sense of Definition
\ref{iper}, with $\dot \alpha(0)<0$, $\dot r(0)>0$. Then the
subordinated family $\{S_t\}$ satisfies, for all
$\varrho>2$, the supercontractive bound
\[
\|S_tu\|_\varrho\le
k_\varrho(t)\|u\|_2^{\gamma(\varrho)}
\]
where
\[
\gamma(\varrho) =\frac{2}{\varrho}\frac{p\varrho
+d(p-2)}{2p+d(p-2)}.
\]
\end{lem}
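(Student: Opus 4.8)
The plan is to derive the stated $\rL^2\to\rL^\varrho$ bound for the subordinated family $S_t$ by inserting the ultracontractive estimates of Theorem \ref{plap} into the subordination integral \eqref{subord2}. By Theorem \ref{plap}, under the running hypotheses (hypercontractivity with $\dot\alpha(0)<0$, $\dot r(0)>0$) the original semigroup $\{T_t\}$ satisfies, for every $\varrho>2$ and with $q=2$,
\begin{equation}
\|T_su\|_\varrho\le C\,\frac{\|u\|_2^{\gamma}}{s^{\alpha_0}},\qquad
\alpha_0=\frac{d}{\varrho}\frac{\varrho-2}{2p+d(p-2)},\quad
\gamma=\frac{2}{\varrho}\frac{p\varrho+d(p-2)}{2p+d(p-2)}.
\end{equation}
Note that the exponent $\gamma$ here is exactly the $\gamma(\varrho)$ in the statement, which is reassuring: the power of $\|u\|_2$ is unaffected by subordination, only the time dependence changes.

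\emph{Key steps.} First I would take the $\rL^\varrho$ norm of \eqref{subord2} and use the triangle inequality for the Bochner integral to obtain
\begin{equation}
\|S_tu\|_\varrho\le\int_0^{+\infty}\|T_su\|_\varrho\,\mu^{(\alpha)}_t(\rd s)
\le C\,\|u\|_2^{\gamma(\varrho)}\int_0^{+\infty}s^{-\alpha_0}\,\mu^{(\alpha)}_t(\rd s).
\end{equation}
Second, I would invoke the immediately preceding Lemma, which guarantees precisely that $\int_0^{+\infty}s^{-\beta}\,\mu^{(\alpha)}_t(\rd s)<+\infty$ for every $\beta>0$ and every $t>0$. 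Since $\alpha_0>0$ whenever $\varrho>2$, this negative-moment integral is finite; setting
\begin{equation}
k_\varrho(t):=C\int_0^{+\infty}s^{-\alpha_0}\,\mu^{(\alpha)}_t(\rd s)
\end{equation}
then yields the claimed bound $\|S_tu\|_\varrho\le k_\varrho(t)\|u\|_2^{\gamma(\varrho)}$.

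\emph{Main obstacle.} The one point requiring genuine care is the \emph{uniform} control over $u$ needed to pull $\|u\|_2^{\gamma(\varrho)}$ out of the integral and to justify the Bochner triangle inequality: the factor $s^{-\alpha_0}$ blows up as $s\to0^+$, so integrability near the origin rests entirely on the negative-moment estimate of the preceding Lemma rather than on the behaviour of $\|T_su\|_\varrho$ (which may itself degenerate as $s\to0$ since $T_su$ need not lie in $\rL^\varrho$ at $s=0$). I would therefore be explicit that the bound \eqref{Main-Hyper} holds pointwise in $s>0$ with the \emph{same} constant $C$ and exponents, so that the time singularity is entirely absorbed into the measure integral, and that the resulting $k_\varrho(t)$ is finite for each fixed $t>0$; its continuity in $t$, if needed, follows from the scaling $g_t(x)=t^{-1/\alpha}g_1(xt^{-1/\alpha})$ of the density recorded earlier. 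The homogeneity exponent $\gamma(\varrho)$ matches by direct inspection, completing the proof.
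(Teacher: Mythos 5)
Your proposal is correct and follows essentially the same route as the paper: apply the triangle inequality for the Bochner integral, insert the bound of Theorem \ref{plap} with $q=2$ (which gives exactly the exponent $\gamma(\varrho)$ in the statement), and absorb the resulting time singularity $s^{-\alpha_0}$ into the finite negative moment $\int_0^{+\infty}s^{-\beta}\mu^{(\alpha)}_t(\rd s)$ guaranteed by the preceding Lemma. Your additional remarks on where the integrability near $s=0$ comes from only make explicit what the paper leaves implicit.
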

\begin{proof}
This is an immediate consequence of the bound
\[
\|S_tu\|_\varrho\le\int_0^{+\infty}\|T_su\|_\varrho\mu_t(\ds)\le
C\|u\|_2^{\gamma(\varrho)}\int_0^{+\infty}s^{-\beta}\mu_t(\ds)
\]
for a suitable positive $\beta$, valid because of the results of
Theorem \ref{plap}, and of the above Lemma.
\end{proof}

The results of section 2 then can be used directly to prove
logarithmic Sobolev inequalities for the right derivative at $t=0$
of $S_t$, although such map does not give rise to a semigroup. In
fact we made no use of the semigroup property there. Noticing in addition
that the explicit expression of $\gamma$ shows immediately that
$\varrho\gamma(\varrho)>2$ for any $\varrho>2$, one therefore has, proceeding as in the proof of Theorem \ref{GNI1}:
\begin{lem}
Let, for $\alpha\in(0,1)$, $A^\alpha$ be the maximally monotone
operator (see Theorem \ref{potenza}) associated to the
right derivative at $t=0$ of the subordinated family $S_tu$ defined in
the previous Lemma. Then, under the assumptions of such Lemma, the inequality
\begin{equation}
\dfrac{\varrho-2}{\varrho}\int_Xu^2\log |u|\rd
m-\dfrac{\varrho\gamma(\varrho)-2}{\varrho\gamma(\varrho)}\Vert
u\Vert_{{2}}^2\log \Vert u\Vert_{{2}}\le (u,A^\alpha
u)_{\rL^2}+C\Vert u\Vert_{{2}}^2
\end{equation}
holds true for each $u$ in $D(A)$ and
$\varrho>2$.
\end{lem}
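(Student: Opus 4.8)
The plan is to read Lemma \ref{boh} as the statement that the subordinated family $\{S_t\}$ is $(\gamma(\varrho),\varrho)$-supercontractive in the sense of Definition \ref{assum} for every fixed $\varrho>2$, and then to replay the supercontractive half of the proof of Theorem \ref{GNI1} with $T_t$ replaced by $S_t$, the pair $(\beta,s)$ replaced by $(\gamma(\varrho),\varrho)$, and the generator $H$ replaced by $A^\alpha$. The only structural difference is that $\{S_t\}$ is not a semigroup; but, as already stressed in the text, that argument never uses the semigroup property. It only requires that the single path $t\mapsto S_tu$ have a right derivative at $t=0$, and by Theorem \ref{potenza} this holds for every $u\in D(A)$, the right derivative being $-A^\alpha u$.

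First I would check that the exponents $(\gamma(\varrho),\varrho)$ satisfy the hypotheses of the supercontractive case of Theorem \ref{GNI1}, namely $\gamma(\varrho)<1$ and $\varrho\ge\max(2,2/\gamma(\varrho))$. Both follow from the explicit value $\varrho\gamma(\varrho)=2[p\varrho+d(p-2)]/[2p+d(p-2)]$: for $\varrho>2$ one has $p\varrho+d(p-2)>2p+d(p-2)$, hence $\varrho\gamma(\varrho)>2$ (so that $\varrho>2/\gamma(\varrho)$), and rearranging the same inequality gives $\gamma(\varrho)<1$. I also need the growth condition that $\lim_{t\to0}k_\varrho(t)^t$ be finite. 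From the proof of Lemma \ref{boh} one has $k_\varrho(t)=C\int_0^{+\infty}s^{-\beta}\mu_t(\ds)$ for a suitable $\beta>0$, and the scaling $g_t(x)=t^{-1/\alpha}g(xt^{-1/\alpha})$ of the density of $\mu_t$ gives $k_\varrho(t)=c\,t^{-\beta/\alpha}$; since $t\log t\to0$ one obtains $k_\varrho(t)^t\to1$, so the condition holds with limit $1$.

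Next I would run the interpolation step: interpolating the $\rL^2$ contractivity $\|S_tu\|_2\le\|u\|_2$ (valid because each $T_s$ is an $\rL^2$ contraction and $\mu_t$ is a probability measure) against the bound of Lemma \ref{boh} produces, exactly as in \eqref{int1}--\eqref{tecnica}, an estimate $\|S_tu\|_{p(t)}^{p(t)a(t)}\le A\|u\|_2^2$ with $p(0)=2$, $p(0)a(0)=2$, and some $A>1$ (admissible since the front constant tends to $1<A$). Differentiating the left-hand side at $t=0$ via \cite[Lemma 3.8]{Gr2} and the chain rule, with $v(t)=S_tu$, $v(0)=u$ and $v'(0)=-A^\alpha u$, yields the derivative $\frac{2(\varrho-2)}{\varrho}\int_X u^2\log|u|\,\rd m-2(u,A^\alpha u)_{\rL^2}-2\frac{\varrho\gamma(\varrho)-2}{\varrho\gamma(\varrho)}\|u\|_2^2\log\|u\|_2$. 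The tangent-line argument, using $f(0)=\|u\|_2^2<A\|u\|_2^2$, then bounds this derivative by $\frac{A-1}{t}\|u\|_2^2$ for $t$ small; fixing one such $t$, dividing by $2$, and absorbing the resulting constant into $C$ gives precisely the asserted inequality.

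I expect the main obstacle to be the bookkeeping around the non-semigroup family $\{S_t\}$: confirming that \cite[Lemma 3.8]{Gr2} applies to the single differentiable path $t\mapsto S_tu$ at $t=0$ (rather than to a semigroup orbit), that its right derivative is indeed $-A^\alpha u$ on $D(A)$, and that $k_\varrho(t)$ has the claimed power-law behaviour. The genuinely essential ingredient, however, is the elementary inequality $\varrho\gamma(\varrho)>2$: it is exactly what forces the coefficient $\frac{\varrho\gamma(\varrho)-2}{\varrho\gamma(\varrho)}$ to lie in $(0,1)$, which is the condition under which the reasoning of Lemma \ref{logarit}, reproduced inside the proof of Theorem \ref{GNI1}, can be carried through.
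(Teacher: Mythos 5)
Your proposal is correct and follows essentially the same route as the paper, which simply observes that the supercontractive case of Theorem \ref{GNI1} applies verbatim to the family $\{S_t\}$ (the semigroup property never being used there) once one notes $\varrho\gamma(\varrho)>2$; you have merely filled in the details the paper leaves implicit, including the verification that $\gamma(\varrho)<1$ and that $k_\varrho(t)^t$ stays bounded as $t\to0$.
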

It may be useful to recall again that $D(A_\alpha)\supset D(A)$.

To proceed further we now prove subhomogeneity for the functional
$(u,A^\alpha u)$. In fact:
\begin{lem}
With the above notations and assumptions, the operator $A^\alpha$
enjoys the following property: for any $u\in D(A)$ and for all
positive $\lambda$ one has \beq A^\alpha (\lambda
u)=\lambda^{1+\alpha(p-2)}A^\alpha u. \eeq
\end{lem}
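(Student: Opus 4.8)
The plan is to compute $A^\alpha(\lambda u)$ directly from the explicit formula \eqref{generatore} and to exploit the scaling behaviour of the underlying $p$-Laplacian semigroup $\{T_t\}_{t\ge0}$. The key observation is that the generalized $p$-Laplacian is homogeneous of degree $p-1$, and this homogeneity propagates to a precise scaling law for the semigroup $T_s$ itself. I would first establish this scaling law and then substitute it into the integral representation of $A^\alpha$.

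\medskip

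First I would show that the semigroup associated to $\E_p$ satisfies the scaling identity
\[
T_s(\lambda u)=\lambda\, T_{\lambda^{p-2}s}u\qquad\text{for all }\lambda>0,\ s\ge0,\ u\in\rL^2.
\]
To see this, note that the generator $A$ is (formally) $Au=-\nabla^*(|\nabla u|^{p-2}\nabla u)$, which is homogeneous of degree $p-1$, i.e. $A(\lambda u)=\lambda^{p-1}Au$. If $v(s):=T_s(\lambda u)$ solves $\dot v=-Av$ with $v(0)=\lambda u$, then setting $w(s):=\lambda^{-1}v(\lambda^{2-p}s)$ one checks that $\dot w=-Aw$ with $w(0)=u$, whence $w(s)=T_su$ by uniqueness; rearranging gives the claimed identity. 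Alternatively, and more robustly given that we are dealing with a nonlinear subgradient flow, the same identity can be obtained from the minimizing-movement (resolvent) scheme for the functional $\E_p$, using that $\E_p(\lambda u)=\lambda^p\E_p(u)$ together with the rescaling of the time step; this avoids any appeal to classical differentiability.

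\medskip

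With the scaling law in hand the computation is essentially a change of variables. Starting from \eqref{generatore},
\[
A^\alpha(\lambda u)=\frac{\alpha}{\Gamma(1-\alpha)}\int_0^\infty\frac{\lambda u-T_s(\lambda u)}{s^{1+\alpha}}\,\rd s
=\frac{\alpha}{\Gamma(1-\alpha)}\int_0^\infty\frac{\lambda u-\lambda\,T_{\lambda^{p-2}s}u}{s^{1+\alpha}}\,\rd s,
\]
and substituting $\sigma=\lambda^{p-2}s$, so that $\rd s=\lambda^{2-p}\,\rd\sigma$ and $s^{-1-\alpha}=\lambda^{(p-2)(1+\alpha)}\sigma^{-1-\alpha}$, the prefactor collects to $\lambda\cdot\lambda^{(p-2)(1+\alpha)}\cdot\lambda^{2-p}=\lambda^{1+\alpha(p-2)}$, yielding $A^\alpha(\lambda u)=\lambda^{1+\alpha(p-2)}A^\alpha u$ exactly as stated. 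I would also record that the integrals converge: near $s=0$ the integrand is $O(s^{-\alpha})$ by the Lipschitz bound $\|u-T_su\|_2\le s\|A_\circ u\|_2$ from \cite[Theorem 3.1]{B} (valid for $u\in D(A)$), and for large $s$ the factor $s^{-1-\alpha}$ ensures integrability against the bounded quantity $\|u-T_su\|_2$, so the change of variables is justified for each fixed $\lambda>0$.

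\medskip

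The main obstacle is establishing the scaling identity $T_s(\lambda u)=\lambda T_{\lambda^{p-2}s}u$ rigorously in the nonlinear setting, since $A$ is only a maximally monotone operator and $T_s$ need not be differentiable in the classical sense. The formal argument via the PDE must be replaced by one that respects the generality assumed throughout the paper. The cleanest route, I expect, is through the implicit Euler scheme: the resolvents $J_\tau=(I+\tau A)^{-1}$ satisfy a corresponding scaling, because $u+\tau Au=f$ is equivalent, upon rescaling $u$ and $\tau$, to the resolvent equation for the rescaled data, thanks to $A(\lambda u)=\lambda^{p-1}Au$; passing to the exponential (Crandall–Liggett) limit then transfers the scaling to $T_s$. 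Once this identity is secured, the remainder is the routine change of variables above.
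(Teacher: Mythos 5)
Your proposal is correct and follows essentially the same route as the paper: the paper likewise invokes the scaling identity $T_s(\lambda u)=\lambda T_{\lambda^{p-2}s}u$ (justified there only by the remark that it "can be verified directly from the differential equation satisfied by $T_s$") and then performs the same change of variables in the integral formula \eqref{generatore}. Your additional care about the rigorous derivation of the scaling law via the resolvent scheme and about the convergence of the integrals is a welcome elaboration but does not change the argument.
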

\begin{proof}
We use the explicit expression for $A_\alpha$ when acting on
functions belonging to the domain of $A$ and the fact that
\[
T_s(\lambda u)=\lambda T_{\lambda^{p-2}s}u,
\]
a property which can be verified directly from the differential
equation satisfied by $T_s$. In fact:
\[
\begin{split}
&A^\alpha(\lambda u)=c\int_0^{+\infty}\dfrac{T_s(\lambda
u)-\lambda u}{s^{1+\alpha}}\ds=c\lambda\int_0^{+\infty}\dfrac{T_{\lambda^{p-2}s}u-u}{s^{1+\alpha}}\ds\\
&=c\lambda^{1+\alpha(p-2)}\int_0^{+\infty}\dfrac{T_{s}u-u}{s^{1+\alpha}}\rd
s=\lambda^{1+\alpha(p-2)}A^\alpha u.
\end{split}
\]
\end{proof}

The following Proposition then follows along the same line of proof given in Theorem \ref{GNI1}.

\begin{prop}\label{logsobsub}Under the assumptions of Lemma \ref{boh}, the logarithmic Sobolev inequality
\[
\int_X\dfrac{u^2}{\Vert u\Vert_2^2}\log \dfrac{u^2}{\Vert
u\Vert_2^2}\rd m\le A_1\log\left(A_2\dfrac{(u,A^\alpha
u)_{\rL^2}}{\Vert u\Vert_2^{2+\alpha(p-2)}}\right)
\]
holds for any $u\in D(A)$, with
$A_1=2(1-\gamma(\varrho))/([\gamma(\varrho)(p-2)(\varrho-2)]$ and
$A_2$ a suitable positive constant.
\end{prop}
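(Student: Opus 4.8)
The plan is to run the argument of Lemma \ref{logarit} (equivalently, the supercontractive part of Theorem \ref{GNI1}) essentially verbatim, with the generator $H$ replaced by $A^\alpha$ and with the subhomogeneity degree $p$ replaced by the scaling exponent of $A^\alpha$. The two inputs are already at hand: the inhomogeneous logarithmic Sobolev inequality
\[
\dfrac{\varrho-2}{\varrho}\int_Xu^2\log |u|\rd m-\dfrac{\varrho\gamma(\varrho)-2}{\varrho\gamma(\varrho)}\Vert u\Vert_{2}^2\log \Vert u\Vert_{2}\le (u,A^\alpha u)_{\rL^2}+C\Vert u\Vert_{2}^2
\]
from the Lemma immediately preceding the statement, and the exact homogeneity $A^\alpha(\lambda u)=\lambda^{1+\alpha(p-2)}A^\alpha u$, which yields the scaling $(\lambda u,A^\alpha(\lambda u))_{\rL^2}=\lambda^{2+\alpha(p-2)}(u,A^\alpha u)_{\rL^2}$ and thus plays here the role of subhomogeneity of degree $q:=2+\alpha(p-2)>2$. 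First I would divide the displayed inequality by $(\varrho-2)/\varrho$ to cast it in the form \eqref{logsob}, reading off $c_1=\frac{\varrho\gamma(\varrho)-2}{\gamma(\varrho)(\varrho-2)}$, $c_2=\frac{\varrho}{\varrho-2}$, $c_3=\frac{\varrho C}{\varrho-2}$, and I would verify that $c_1\in(0,1)$, so that $\widetilde c_1:=1-c_1=\frac{2(1-\gamma(\varrho))}{\gamma(\varrho)(\varrho-2)}>0$.

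Next I would insert $\lambda u$ with $\Vert u\Vert_2=1$ and use the scaling above to obtain the analogue of \eqref{parapapa},
\[
\widetilde c_1\log\lambda-c_2\lambda^{\alpha(p-2)}(u,A^\alpha u)_{\rL^2}\le c_3-\int_Xu^2\log|u|\rd m,
\]
where the exponent $\alpha(p-2)=q-2$ now replaces the exponent $p-2$ of Lemma \ref{logarit}. Optimizing the left-hand side over $\lambda>0$ exactly as in Lemma \ref{logarit} by the elementary extremization of $a(\lambda)=A\log\lambda-B\lambda^{\alpha(p-2)}$ (attained at $\overline\lambda=[A/(B\alpha(p-2))]^{1/[\alpha(p-2)]}$) produces a homogeneous bound of the form $\int_Xu^2\log|u|\rd m\le\frac{A_1}{2}\log[A_2(u,A^\alpha u)_{\rL^2}]$ valid for $\Vert u\Vert_2=1$, with $A_1$ governed by $\widetilde c_1$ and the scaling exponent $\alpha(p-2)$ and $A_2$ collecting the remaining constants. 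Finally, applying this to $u/\Vert u\Vert_2$ and invoking the homogeneity of $A^\alpha$ once more to rewrite $(u/\Vert u\Vert_2,A^\alpha(u/\Vert u\Vert_2))_{\rL^2}=(u,A^\alpha u)_{\rL^2}/\Vert u\Vert_2^{2+\alpha(p-2)}$ converts the left-hand entropy into the normalized one and produces the denominator $\Vert u\Vert_2^{2+\alpha(p-2)}$ on the right, giving the asserted inequality.

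The only genuinely new point relative to Lemma \ref{logarit} is the verification that $c_1$ lies in $(0,1)$, since this is what makes $\widetilde c_1>0$ and legitimizes the optimization. The upper bound $c_1<1$ reduces to $\gamma(\varrho)<1$, which holds automatically for $\varrho>2$; the lower bound $c_1>0$ reduces to $\varrho\gamma(\varrho)>2$, a fact already recorded in the discussion preceding the statement (where it was observed that $\varrho\gamma(\varrho)>2$ for every $\varrho>2$). With $c_1\in(0,1)$ secured, the remainder is the same constant-bookkeeping as before, the single structural difference being that the scaling exponent of the functional $(u,A^\alpha u)_{\rL^2}$ is $\alpha(p-2)$ rather than $p-2$, and it is this exponent that enters the constants $A_1$ and $A_2$.
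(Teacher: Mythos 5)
Your proposal is correct and takes essentially the same route as the paper, which likewise reduces the statement to a rerun of the argument of Lemma \ref{logarit}/Theorem \ref{GNI1} after checking that $\gamma(\varrho)<1$, that $\varrho\gamma(\varrho)>2$, and that the functional $(u,A^\alpha u)_{\rL^2}$ is exactly homogeneous of degree $2+\alpha(p-2)>2$. The only ingredient the paper records that you omit is the nonnegativity (positivity on nonzero $u$) of $(u,A^\alpha u)_{\rL^2}$, which it deduces from the explicit formula \eqref{generatore} together with the $\rL^2$-contractivity of $T_s$ (since $T_s0=0$), and which is needed so that the logarithm on the right-hand side makes sense.
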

\begin{proof}
First we notice that $\gamma(\varrho)<1$ for all $\varrho>2$ and
that, as already stated,  $\varrho\gamma(\varrho)>2$ for all such
$\varrho$. The fact that $(u,A^\alpha u)$ is nonnegative follows
from the explicit expression (\ref{generatore}) of $A^\alpha$ and
from the fact that $T_s$ is {\it contractive} in $\rL^2$ because
$T_s0=0$ for all $s$ as $A0=0$. Finally, the functional
$(u,A^\alpha u)$ is, by the previous Lemma, homogeneous of degree
$2+\alpha(p-2)$ which is then always strictly larger than two.

\end{proof}

We are ready to state the final result of this paper, whose proof
is the same given in Theorem \ref{GNI1}.
\begin{thm}\label{sub}(Nash-type inequalities for generators subordinated to the $p$-Laplacian).
Let $(M,g)$ be a complete Riemannian manifold, whose Riemannian
gradient is indicated by $\nabla$. Let $m$ be a $\sigma$-finite
nonnegative measure on $M$, and assume that $\nabla$ is a closed
operator from L$^2(M,m)$ to L$^2(TM,m)$.Consider the strongly
continuous contraction semigroup $\{T_t\}_{t\ge0}$ associated to
the subgradient of the convex l.s.c. functional given by \beq
\E_p(u):=\int_M|\nabla u|^p\,{\rm d}m. \eeq Assume that it is
hypercontractive in the sense of Definition \ref{iper}, with $\dot
\alpha(0)<0$, $\dot r(0)>0$. Let $A^\alpha$ be the generator of
the subordinated semigroup associated to the Bernstein function
$f(x)=x^\alpha$ with $\alpha\in(0,1)$. Then, for any
$\vartheta\in(0,1)$ the inequality \beq \|f\|_2\le c(A^\alpha
f,f)_{{\rm
L}^2}^{\frac{A_1(1-\vartheta)}{\vartheta+pA_1(1-\vartheta)}}\|f\|_{2\vartheta}^{\frac{\vartheta}{\vartheta+pA_1
(1-\vartheta)}} \eeq holds true for a suitable constant $c$ and
for all $f\in{\rm Dom}\,A\cap{\rm L}^{2\vartheta}$, where $A_1>0$
is the constant appearing in Proposition \ref{logsobsub}. In
particular the Nash inequality \beq \|f\|_2\le c(A^\alpha
f,f)_{{\rm L}^2}^{\frac{A_1}{1+pA_1}}\|f\|_1^{\frac{1}{1+pA_1}}
\eeq holds true for all $f\in{\rm Dom}\,A\cap{\rm L}^1$.
\end{thm}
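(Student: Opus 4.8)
The plan is to reproduce, essentially verbatim, the argument of the proof of Theorem \ref{GNI1}, the only change being that one starts from the logarithmic Sobolev inequality of Proposition \ref{logsobsub} in place of the one supplied by Lemma \ref{logarit}. Recall that $D(A^\alpha)\supset D(A)$, so that for $f\in D(A)$ the energy $(A^\alpha f,f)_{\rL^2}$ is well defined, nonnegative, and --- by the homogeneity Lemma preceding Proposition \ref{logsobsub} --- homogeneous of degree $P:=2+\alpha(p-2)>2$. This degree $P$ plays, in the present setting, the role that the subhomogeneity degree $p$ had in Theorem \ref{GNI1}, so that the exponents below are produced with $P$ in place of $p$.

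First I would rewrite Proposition \ref{logsobsub}, using the Young functional \eqref{young}, in the form
\[
J(1,f^2)\le A_1\log\left[A_2\dfrac{(A^\alpha f,f)_{\rL^2}}{\|f\|_2^{P}}\right],\qquad f\in D(A),
\]
which is the exact analogue of the opening line of the proof of Theorem \ref{GNI1} with $(k_1,k_2,p)$ replaced by $(A_1,A_2,P)$. Next I would apply the inequality of \cite{BG4}, namely $\|u\|_q/\|u\|_r\le \exp\!\big(\tfrac{q-r}{qr}J(1,|u|^q)\big)$ for $q>r$, with $q=2$ and $r=2\vartheta<2$. Taking logarithms and inserting the bound above gives
\[
\log\dfrac{\|f\|_2}{\|f\|_{2\vartheta}}\le \dfrac{A_1(1-\vartheta)}{\vartheta}\log\left[A_2\dfrac{(A^\alpha f,f)_{\rL^2}}{\|f\|_2^{P}}\right].
\]
Collecting the $\log\|f\|_2$ terms on the left-hand side and solving, exactly as in Theorem \ref{GNI1}, yields the asserted inequality, with energy exponent $A_1(1-\vartheta)/[\vartheta+PA_1(1-\vartheta)]$ and norm exponent $\vartheta/[\vartheta+PA_1(1-\vartheta)]$; the strict positivity of $(A^\alpha f,f)_{\rL^2}$ that legitimizes the fractional powers is the one recorded in Proposition \ref{logsobsub}. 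Specializing to $\vartheta=1/2$, so that $2\vartheta=1$, produces the stated Nash inequality involving $\|f\|_1$.

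Since the statement itself announces that the proof is the one of Theorem \ref{GNI1}, there is no genuinely new obstacle at this final stage: the substantive work has already been carried out in the preceding lemmas, where one verifies $\gamma(\varrho)<1$ and $\varrho\gamma(\varrho)>2$ (so that the supercontractive bound of Lemma \ref{boh} yields a logarithmic Sobolev inequality of the required shape), together with the homogeneity and positivity of $(u,A^\alpha u)_{\rL^2}$. The only points deserving a word of care are the domain hypothesis $f\in D(A)\cap\rL^{2\vartheta}$ --- which guarantees simultaneously that $(A^\alpha f,f)_{\rL^2}$ is defined, via $D(A^\alpha)\supset D(A)$, and that the right-hand quantity $\|f\|_{2\vartheta}$ is finite --- and the consistent bookkeeping of the homogeneity degree $P=2+\alpha(p-2)$ throughout, which is the place where a reader must be attentive in order to match the exponents displayed in the theorem.
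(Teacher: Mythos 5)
Your proposal is correct and follows exactly the paper's route: the paper gives no separate argument for Theorem \ref{sub} but simply declares that the proof is the one of Theorem \ref{GNI1}, now fed with the logarithmic Sobolev inequality of Proposition \ref{logsobsub} and the homogeneity and positivity of $(u,A^\alpha u)_{\rL^2}$ established in the preceding lemmas, which is precisely what you carry out. Your remark that the homogeneity degree entering the exponents should be $2+\alpha(p-2)$ rather than the $p$ appearing in the printed statement is a legitimate bookkeeping point that the paper itself does not address.
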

\begin{rem}
An elementary, although tedious, calculation shows that the
dimension $d_\alpha$ of the subordinated semigroup is, if $d$ is the
dimension of the original semigroup (in the sense given in the statement of Corollary \eqref{plapl}),
\[
d_\alpha=\dfrac{d[2+\alpha(p-2)]}{\alpha p}.
\]
\end{rem}

\bibliographystyle{amsplain}

\end{document}